\documentclass[12pt]{article}
\usepackage{geometry}
\usepackage{graphicx}
\usepackage{amsmath, amssymb, amsthm, xcolor}
\usepackage{caption}
\usepackage[hidelinks]{hyperref}

\title{Circuit Decompositions for Triangulations of Surfaces}
\author{Jens Harlander, Maizie Quatrone}
\date{September 2026}

\newtheorem{theorem}{Theorem}
\newtheorem{lemma}{Lemma}
\newtheorem{corollary}{Corollary}
\newtheorem{proposition}{Proposition}

\theoremstyle{definition}
\newtheorem{definition}{Definition}

\begin{document}

\maketitle

\begin{abstract}
An Euler circuit of a graph is a closed path that visits every edge of the graph exactly once. Euler circuit and circuit decomposition problems can also be formulated for higher dimensional simplicial complexes. 
An Euler \textit{$k$-circuit} in $K$ is a cyclic sequence of vertices $v_1\dots v_n$ such that every $k+1$ adjacent terms $\{ v_i,v_{i+1},\dots,v_{i+k} \}$ (indexed modulo $n$) form a $k$-simplex, and every $k$-simplex of $K$ appears exactly once in the sequence $v_1v_2\ldots v_n(v_1v_2\ldots v_k)$. 
We investigate the 2-circuit decomposition problem for triangulated closed compact surfaces. Using interior angles of paths we define an obstruction $\theta(S)\in H^1(S,\mathbb Z_3)$, where $S$ is an orientable surface, which vanishes if and only if $S$ has a 2-circuit decomposition. We also show that a non-orientable triangulated surface has a 2-circuit decomposition if and only if its orientable 2-fold cover does.

\end{abstract}

\section{Introduction}

Arguably, the K\"onigsberg bridge problem \cite{wikipedia} marks the beginning of combinatorial topology. In graph theoretic terms it ask under what conditions a connected graph permits a closed edge path  that uses every edge of the graph exactly once. This problem was solved by Euler in 1736: The graph admits such a closed path (these days referred to as an Euler circuit) if and only if the valency at every vertex is even. There is a natural generalization of the K\"onigsberg bridge problem to higher dimensional complexes which, in general, is not well understood. We will describe it after introducing some notation.

We assume that the reader is familiar with the idea of a simplicial complex, its topological realization, fundamental group, homology, and homotopy. For definitions see Hatcher \cite{Hatcher} and Munkres \cite{Munkres}.

\begin{definition}
    Let $K$ be a simplicial complex of dimension $\ge k$. A \textit{$k$-circuit} in $K$ is a cyclic sequence of vertices $\vec v=v_1\dots v_n$ such that every $k+1$ adjacent terms $\{ v_i,v_{i+1},\dots,v_{i+k} \}$ (indexed modulo $n$) form a $k$-simplex in $K$ which appears exactly once in the sequence $v_1v_2\ldots v_n(v_1v_2\ldots v_k)$. A \textit{k-circuit decomposition} of a simplicial complex $K$ is a collection of $k$-circuits such that each k-simplex of $K$ occurs in exactly one of these circuits. If a k-circuit decomposition consists of a single circuit, then we call such a circuit an \textit{Euler k-circuit}.
\end{definition}

\noindent{\bf K\"onigsberg bridge question in higher dimensions}: Under what conditions does a simplicial complex admit an Euler k-circuit or at least a k-circuit decomposition? 

If a connected graph has a 1-circuit decomposition, then the circuits can be fused to produce an Euler 1-circuit. Thus a connected graph has a 1-circuit decomposition if and only if it admits an Euler 1-circuit. This is not true in higher dimensions. See Figure \ref{no_euler_figure}.

\begin{figure}[h]
    \centering
    \includegraphics[width=0.5\linewidth]{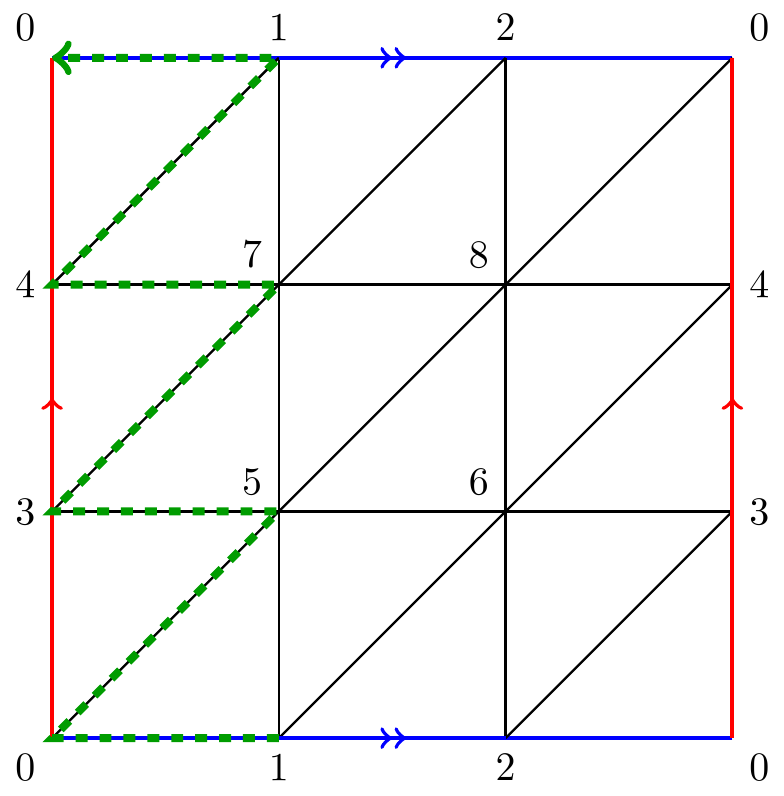}
    \caption{A 2-circuit decomposition of a torus: 105374(10), 216587(21), 023648(02); it can be shown (left to the reader) that no Euler 2-circuit exists for this triangulation.}
    \label{no_euler_figure}
\end{figure}

The above question was considered, although in set theoretic language, by Chung, Diaconis and Graham \cite{Chung} in the case where $K=\Delta^n$ is a single $n$-simplex. They posed a conjecture concerning the existence of Euler $k$-circuits in this setting, which was confirmed by Jackson \cite{Jackson} for small $k$. 
The full conjecture was confirmed by Glock, Joos, K\"uhn, and Osthus \cite{Glock}.

If $K$ is a simplicial complex and $v$ is a vertex then the {\em $k$-valency at $v$}, $\text{val}_k(v)$, is the number of $k$-simplices containing $v$. We say $K$ is {\em $m$-divisible} in dimension $k$ if $\text{val}_k(v)$ is divisible by $m$ for every vertex. 

\begin{proposition} 
    If $K$ has a $k$-circuit decomposition then it is $k+1$-divisible.
\end{proposition}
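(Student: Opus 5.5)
Since the $k$-simplices of $K$ are partitioned among the circuits of the decomposition, $\text{val}_k(v)$ is the sum over the circuits of the number of $k$-simplices containing $v$ that occur in each circuit. It therefore suffices to prove the statement for a single $k$-circuit $\vec v = v_1\dots v_n$: the number of its $k$-simplices containing a fixed vertex $v$ is divisible by $k+1$.

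Fix $v$ and a $k$-circuit $\vec v=v_1\dots v_n$. The simplices occurring in the circuit are the windows $\sigma_i=\{v_i,\dots,v_{i+k}\}$ for $i=1,\dots,n$, indices modulo $n$. By definition each window spans a genuine $k$-simplex, so its $k+1$ entries are pairwise distinct, and distinct indices $i$ give distinct simplices. So we must count the indices $i$ with $v\in\sigma_i$. Let $P=\{j : v_j=v\}$ be the set of positions where $v$ appears. Then $v\in\sigma_i$ exactly when some $j\in P$ lies in the window $\{i,i+1,\dots,i+k\}$, that is, when $i\in\{j-k,\dots,j\}$ for some $j\in P$.

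The key step is that these sets of starting indices are pairwise disjoint for different $j\in P$. If $j\ne j'$ are both in $P$ and some window $\{i,\dots,i+k\}$ contained both, then $\sigma_i$ would contain $v$ twice and so would have fewer than $k+1$ distinct vertices, which is impossible. Hence any two occurrences of $v$ are at cyclic distance at least $k+1$, and each $i$ with $v\in\sigma_i$ corresponds to exactly one $j\in P$. For each $j$ there are exactly $k+1$ such $i$, namely $j-k,\dots,j$. These are distinct modulo $n$ because $n\ge k+1$, since the first window already has $k+1$ distinct entries. Therefore the number of simplices of the circuit containing $v$ is $(k+1)|P|$.

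Summing over the circuits of the decomposition, $\text{val}_k(v)=(k+1)\sum|P|$, which is divisible by $k+1$.

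The only delicate point is the cyclic indexing. The counts must be modulo $n$, and the case of small $n$ needs care. Because each window of length $k+1$ has distinct entries, $n\ge k+1$ holds automatically. If $n=k+1$, then $v$ occurs once and every window contains it, which gives $k+1$ simplices containing $v$. These simplices are all the same set, so they cannot appear "exactly once" unless $k=0$; the formula still holds either way, since it is just a count of indices.
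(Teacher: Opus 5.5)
Your proof is correct and is the same argument as the paper's: each occurrence of $v$ in a circuit lies in exactly the $k+1$ windows starting at positions $j-k,\dots,j$, so $\text{val}_k(v)$ is $k+1$ times the total number of occurrences of $v$ across the circuits. The only difference is that you justify two points the paper leaves implicit. Windows coming from distinct occurrences of $v$ cannot coincide, since a $k$-simplex has no repeated vertex, and the $k+1$ windows for one occurrence are distinct simplices, since $n\ge k+1$ and each simplex appears exactly once.
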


\begin{proof}
    Let $v$ be a vertex of $K$. If $v$ does not occur in any of the circuits, then $v$ is not contained in any $k$-simplex, so $\text{val}_k(v)=0$, which is divisible by $k+1$. Assume that $v$ occurs in some circuit as $v=v_i$. Then $v$ is contained in the simplices $\{ v_i, v_{i+1}, \ldots, v_{i+k} \}$, $\{ v_{i-1}, v_{i}, \ldots, v_{i-1+k} \}$, \dots, $\{ v_{i-k}, v_{i-1}, \ldots, v_{i} \}$, a set of $k+1$ simplices. Thus, if $v$ occurs $m$ times in the circuits, then val$_k(v)=m(k+1)$.
\end{proof}

\begin{figure}[h!]\label{subdivision_figure}
    \centering
    \includegraphics[width=0.8\linewidth]{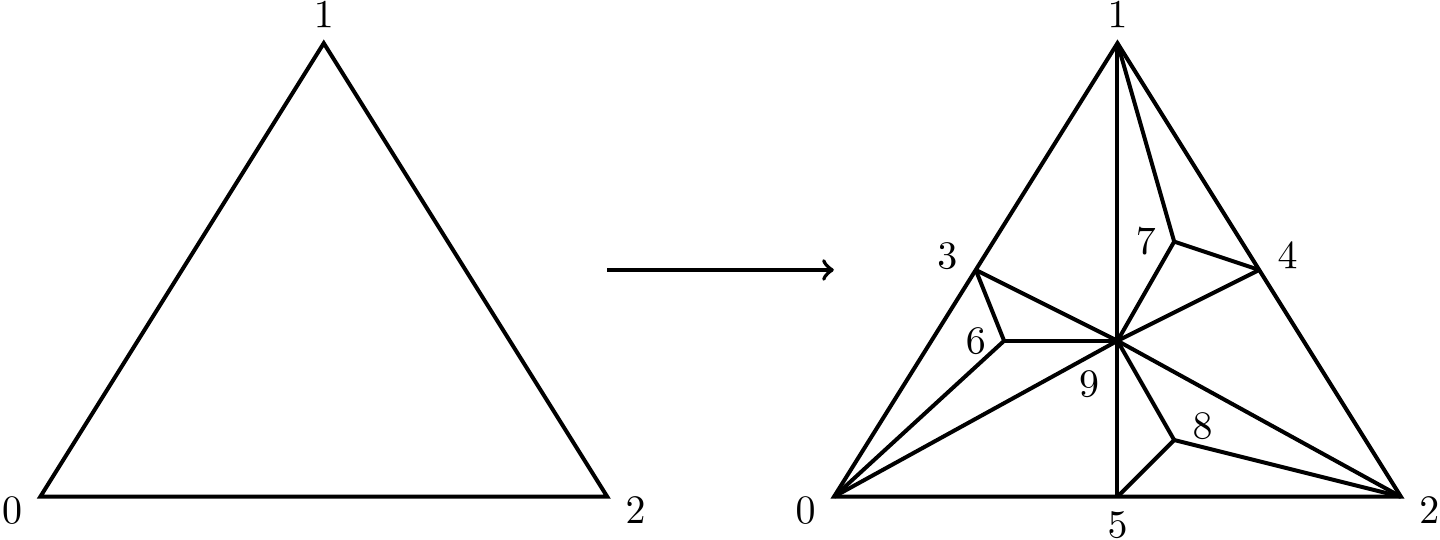}
    \caption{Subdividing triangles in a 2-complex leads to a triangulation that has a 2-circuit decomposition. This is because the subdivided triangle admits the Euler 2-circuit 590639174928(59).}
    \label{subdivision_figure}
\end{figure}

In this paper we will answer the circuit decomposition problem in dimension 2 for triangulated closed compact surfaces $S$. We will construct a cohomological obstruction $\theta(\bar S) \in H^1(\bar S,\mathbb Z_3)$, where $\bar S$ is the 2-fold orientable cover of $S$ in case $S$ is not orientable, and $\bar S=S$ in case it is, and show:
\begin{theorem}
    Let $S$ be a 3-divisible closed and compact surface. Then $S$ has a 2-circuit decomposition if and only if $\theta(\bar S)=0$.
\end{theorem}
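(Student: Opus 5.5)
The plan is to translate both conditions in the theorem into combinatorics of the dual graph, and to identify $\theta(\bar S)$ with the obstruction to a proper $3$-colouring of the vertices.

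\textbf{Reduction to $\bar S$.} I would first reduce to the orientable case by showing directly that $S$ has a $2$-circuit decomposition if and only if $\bar S$ does. Lifting is straightforward: each circuit lifts to one or two circuits in $\bar S$, and every triangle of $\bar S$ is covered exactly once. For descent, I would push circuits of $\bar S$ down. The circuits come in pairs interchanged by the deck transformation, or are invariant under it, so choosing one representative per pair, and halving the invariant ones, gives a decomposition of $S$. From here on $S$ is orientable.

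\textbf{Dual reformulation.} In a $2$-circuit $v_1\dots v_n$, the consecutive triangles $T_i=\{v_i,v_{i+1},v_{i+2}\}$ and $T_{i+1}$ share the edge $v_{i+1}v_{i+2}$. Hence a circuit is a closed walk in the cubic dual graph. The \emph{pivot} of $T_i$ (the common vertex of its entry and exit edges) is $v_{i+1}$, and the pivot of $T_{i+1}$ is $v_{i+2}$. So pivots on the two sides of a crossed edge are opposite endpoints: the walk zigzags, turning alternately left and right. Since every edge of the surface lies in exactly two triangles, a decomposition is the same thing as a perfect matching $M$ of the dual graph (the edge not crossed in each triangle) such that along every crossed edge the two pivots are distinct endpoints.

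\textbf{Identifying $\theta$ with a colouring obstruction.} I would interpret $\theta$ as follows. Along an edge path, add up mod $3$ the number of triangle corners on the left at each interior vertex. Because $\mathrm{val}_2(v)\equiv 0 \pmod 3$, sweeping a path across a vertex changes this sum by a multiple of $3$, so we get a class in $H^1(S,\mathbb Z_3)$. The key lemma is that this class is the monodromy of propagating a proper colouring $V\to\mathbb Z_3$ from triangle to triangle. (Orientability is what makes the monodromy a rotation of $\mathbb Z_3$ rather than an arbitrary permutation.) Hence $\theta(S)=0$ if and only if the vertices admit a colouring in which every triangle receives all three colours.

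\textbf{Necessity.} Given a decomposition, I would build such a colouring locally: inside a circuit, colour $v_i$ by $i \bmod 3$. I would then check that the angle sum around any closed edge path vanishes mod $3$. Each circuit contributes a $3$-periodic pattern of corners, and any path can be homotoped, using $3$-divisibility, to cross circuits transversally. If this is awkward, an alternative is to define directly a $\mathbb Z_3$-valued $1$-cochain $f$ from the decomposition: set $f(a\to b)=\pm1$ according to whether $ab$ or $ba$ appears consecutively (or at distance two) in its circuit. Then show $\delta f=0$ and that $f$ is a coboundary whose class equals $\theta$.

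\textbf{Sufficiency.} Given a colouring, call each triangle positive or negative according to whether $0,1,2$ appear counterclockwise. Adjacent triangles have opposite signs, so the dual graph is bipartite. Colouring each edge by its missing vertex colour gives a proper $3$-edge-colouring of the dual. A circuit $v_1\dots v_n$ coloured $v_i\mapsto i \bmod 3$ is exactly a dual cycle whose crossed-edge colours read $0,1,2,0,1,2,\dots$ while the signs alternate. So we need a partition of the triangles into such cycles. I would first form the obvious local zigzag strips: from a positive triangle cross its colour-$k$ edge, then the colour-$(k+1)$ edge, and so on. These strips follow the rule ``next colour'' around each vertex fan. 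Because each vertex has valency divisible by $3$, the strips around a vertex close up consistently. I would then check that every triangle gets exactly one entry and one exit; if not, I would repair conflicts by local surgery, exchanging pairings between two strips meeting at a vertex.

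The \textbf{main obstacle} is this sufficiency construction. The turning rule depends on the position mod $3$ along the walk, not just on the triangle, so it is not obvious that a consistent global choice exists. This is where I expect most of the work, and possibly a more careful use of $\theta=0$ beyond merely having a colouring.
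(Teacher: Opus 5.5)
Your proposal has two genuine gaps: the central lemma about vertex colourings is false, and the descent step in the reduction to $\bar S$ is unjustified.

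\textbf{The colouring lemma.} Your key lemma is false, and both directions of the orientable case rest on it. Propagating a proper vertex $3$-colouring around a vertex $v$ forces the link vertices to alternate between the two colours other than that of $v$. So the local monodromy is trivial when $\mathrm{val}_2(v)$ is even and is a transposition when it is odd. It is governed by degrees mod $2$, not mod $3$, and orientability does not make it a rotation.

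Concretely, the boundary of a tetrahedron is a $3$-divisible sphere, so $\theta=0$, and $1234$ is an Euler $2$-circuit. Yet $K_4$ has no proper $3$-colouring. The same example breaks your necessity step. Circuits need not have length divisible by $3$, and a vertex can occur in several circuits at unrelated positions, so ``colour $v_i$ by $i \bmod 3$'' is not well defined.

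Your sufficiency setup (alternating signs, bipartite dual, a missing-colour edge colouring that is $2$-periodic around each vertex) describes the opposite of what a decomposition produces. The right object is already in your dual reformulation, which is correct. Your zigzag perfect matching is exactly a weak labeling: label $0$ on uncrossed edges and $1$ on crossed ones, so the edges around every vertex read $0,1,1,0,1,1,\dots$.

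On an oriented surface this is equivalent to an edge labeling $\lambda\colon E(S)\to\mathbb Z_3$ that reads $0,1,2,0,1,2,\dots$ in the orientation direction around every vertex. In dual terms, this is a $3$-edge-colouring of the dual in which \emph{all} triangles have the same rotation. Your corner-counting $\theta$ is exactly the monodromy of propagating such a labeling along a path, since the label changes by the angle (mod $3$) from one edge of the path to the next. Hence $\theta=0$ if and only if a labeling exists.

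Sufficiency then has no obstacle. Replace $2$ by $1$; the strip through any triangle along edges labeled $1$ is forced and closes up into a circuit. Distinct strips are disjoint by uniqueness, so no repair surgery is needed.

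\textbf{The descent step.} Nothing makes a given decomposition of $\bar S$ invariant under the deck involution $d$. In general $d$ of one of its circuits is not one of its circuits, so ``one representative per pair'' has no meaning. The paper's index-$3$ torus covering shows that, for coverings in general, no invariant decomposition need exist.

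You must produce an invariant decomposition. The paper does this by counting. On an oriented labelable surface a labeling is determined by its value on one edge, so there are exactly three labelings. Hence there are exactly three weak labelings, i.e.\ exactly three decompositions. The involution $d$ permutes this three-element set (it preserves weak labelings even though it reverses orientation), so it fixes one. That fixed weak labeling descends to $S$ because $pr$ is a simplicial isomorphism on stars.

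Once you have a $d$-invariant decomposition, your pairing-and-halving projection works. Your lifting direction is fine as stated.
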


\section{Fundamental group, orientations, and angles}\label{fundamentalgroup_section}

A path in $K$ is a sequence of oriented edges $p=e_1\ldots e_n$, where $e_i=[v_iv_{i+1}]$, $1\le i\le n$. It is closed if $v_{n+1}=v_1$. We denote by $P(K,v_0)$ the set of closed paths in $K$ starting and ending at $v_0$, including the empty path. If $e=[uv]$ is an oriented edge, then we denote by $\bar e=[vu]$ the edge with opposite orientation. Pairs $e\bar e$ and $\bar e e$ are called a \textit{canceling edge pair}. We define two operations on $P(K,v_0)$:
\begin{enumerate}
    \item Insertion and deletion of canceling pairs in paths in $P(K,v_0)$;
    \item Rerouting across triangles: If $p=e_1\ldots e_n$ is a path in $P(K,v_0)$, $e_j=[uv]$ and $\{u,v,w\}$ is a 2-simplex, then we may replace $e_j$ by the path $[uw][wv]$; furthermore if $e_je_{j+1}$ is of the form $[uw][wv]$, and $\{u,v,w\}$ is a 2-simplex, then the pair may be replaced by the single edge $[uv]$.
\end{enumerate} 

We say that two elements of $P(K,v_0)$ are \textit{homotopic} if one can be transformed into the other by the two operations. Homotopy defines an equivalence relation on $P(K,v_0)$. If $p \in P(K,v_0)$, we denote by $[p]$ its homotopy class. We define
$$\pi_1(K,v_0)=\{ [p]\ |\ p\in P(K,v_0)\}.$$ 
If $p=e_1\ldots e_m$ and $p'=e'_1\ldots e'_n$ are elements of $P(K,v_0)$, then we denote by $pp'=e_1\ldots e_me'_1\ldots e'_n$ the concatenation. One can show that $[p]\cdot [p']=[pp']$ defines a group operation on $\pi_1(K,v_0)$. It follows from Van Kampen's theorem (see \cite{Hatcher}) that our $\pi_1(K,v_0)$ is isomorphic to the fundamental group of the topological realization of $K$.

If $K$ is a simplicial complex and $v$ is one of its vertices, then the link at $v$, lk$(v)$, is the subcomplex consisting of simplices $\tau$ so that $v\not\in \tau$ and $\tau\cup\{ v \}$ is a simplex of $K$. The star at $v$, st$(v)$, is the cone on  lk$(v)$. That is, it consists of $v$, the simplices of the link, together with the simplices of the form  $\tau\cup \{ v\}$, where $\tau$ is a simplex in the link.

Let $S$ be a triangulation of a closed compact surface (from now on simply referred to as surface). If $v$ is a vertex in $S$, then st$(v)$ is a triangulated disc and lk$(v)$ a subdivided circle (the boundary of the disc). A \textit{local orientation at a vertex $v$} is a choice of orientations on the 2-simplices of st$(v)$ so that the boundary of the 2-cycle $\gamma_v=\sum_{\sigma\in\text{st}^{(2)}(v)}\sigma$ is the 1-cycle consisting of exactly the edges of lk$(v)$. This closed path around $v$ best conveys the idea of a local orientation. 

An orientation of $S$ is a choice of orientations on its 2-simplices $\sigma\in S^{(2)}$ such that $\gamma=\sum_{\sigma\in S^{(2)}}\sigma$ is a 2-cycle. It can be shown that if $S$ admits an orientation, then $\gamma$ generates the second homology $H_2(S)$. Also note that an orientation of $S$ induces local orientations at every vertex.

Assume that $S$ is an oriented surface (i.e. a surface with a fixed orientation). Let $v$ be a vertex of $S$ and let $e_1e_2$ be a path in st$(v)$, say $e_1=[u,v]$ and $e_2=[v,w]$. We define the \textit{angle} $\theta(e_1e_2)$ to be the number of edges in the lk$(v)$ encountered when traveling from $w$ to $u$ in clockwise direction. If $p=e_1e_2\ldots e_n$ is a closed path then we define the angle sum $\theta(p)=\sum_{i=1}^n \theta(e_ie_{i+1})$, where $e_{n+1}=e_1$. If $p$ and $p'$ are in $P(S,v_0)$, then $\theta(pp')=\theta(p)+\theta(p')$.

\begin{theorem}
    If $S$ is an oriented 3-divisible surface, then 
    $$\theta\colon \pi_1(S,v_0)\to \mathbb Z_3,$$ defined by $\theta[p]=\theta(p)\mod3$, is a well defined group homomorphism.
\end{theorem}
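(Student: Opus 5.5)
The plan is to reduce the angle to a difference of ``positions'' in each link, modulo $3$. Once that is done, invariance under both homotopy operations is a telescoping computation. Being a homomorphism is then automatic from the additivity $\theta(pp')=\theta(p)+\theta(p')$ already noted.

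\emph{Step 1 (position functions).} Fix a vertex $v$ and let $d=\text{val}_2(v)$, which is also the number of edges of $\text{lk}(v)$. Number the vertices of $\text{lk}(v)$ consecutively in the clockwise direction given by the local orientation induced from the orientation of $S$, giving a function $\text{pos}_v\colon \text{lk}(v)^{(0)}\to \mathbb Z_d$. Then for $e_1=[uv]$, $e_2=[vw]$ we have $\theta(e_1e_2)\equiv \text{pos}_v(u)-\text{pos}_v(w) \pmod d$. This includes the case $w=u$, where the angle is $0$ or $d$. Since $S$ is $3$-divisible, $3\mid d$, so reducing mod $3$ gives
$$\theta(e_1e_2)\equiv \text{pos}_v(u)-\text{pos}_v(w)\pmod 3.$$
Thus each angle mod $3$ is a well-defined difference, and the choice of starting point of the numbering is irrelevant.

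\emph{Step 2 (canceling pairs).} Suppose $[vu][uv]$ is inserted between edges $[xv]$ and $[vy]$ of a closed path. The old contribution at $v$ is $\text{pos}_v(x)-\text{pos}_v(y)$. The new contributions are as follows:
\begin{itemize}
\item at the first visit to $v$: $\text{pos}_v(x)-\text{pos}_v(u)$;
\item at $u$: $\text{pos}_u(v)-\text{pos}_u(v)=0$;
\item at the second visit to $v$: $\text{pos}_v(u)-\text{pos}_v(y)$.
\end{itemize}
These telescope to the old value. All other angles are unchanged. Degenerate short paths, such as the closed path $e\bar e$ or insertion into the empty path, give angle sums $0$ directly, and cyclic indexing handles insertion at the base point.

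\emph{Step 3 (rerouting across a triangle).} Replace $[uv]$ by $[uw][wv]$, where $\{u,v,w\}$ is a $2$-simplex and the path reads $[xu][uv][vy]$ locally. The change in the angle sum mod $3$ is
$$\bigl(\text{pos}_u(v)-\text{pos}_u(w)\bigr)+\bigl(\text{pos}_w(u)-\text{pos}_w(v)\bigr)+\bigl(\text{pos}_v(w)-\text{pos}_v(u)\bigr).$$
Since $\{u,v,w\}$ is a triangle, in each link the two relevant vertices are adjacent, so each bracket is $\pm1$.

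The main obstacle is showing that all three brackets have the same sign, so the sum is $\pm3\equiv0$. For this I would use the orientation of $S$. Suppose the triangle is oriented as $[uvw]$ in the $2$-cycle $\gamma$. The local orientation at each of its vertices is the restriction of that orientation. Hence going clockwise in $\text{lk}(u)$ from $v$ to $w$ is the same rotational sense as going from $w$ to $u$ in $\text{lk}(v)$ and from $u$ to $v$ in $\text{lk}(w)$, since these are cyclic permutations of the boundary $[uv]+[vw]+[wu]$. I would make this precise by noting that $\partial$ of the star $2$-chain at $u$ contains the link edge $[vw]$ with the sign coming from $[uvw]$, and similarly at $v$ and $w$. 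From this, each bracket equals the same value $\epsilon\in\{\pm1\}$, so the change is $3\epsilon\equiv0\pmod 3$.

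The reverse move, replacing $[uw][wv]$ by $[uv]$, is the inverse of this one, so it also preserves the angle sum mod $3$. Hence $\theta$ is constant on homotopy classes and is well defined on $\pi_1(S,v_0)$. Additivity under concatenation gives $\theta([p][p'])=\theta[p]+\theta[p']$, so $\theta$ is a homomorphism.
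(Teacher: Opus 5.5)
Your proof is correct and follows the same route as the paper. Both arguments check that the angle sum mod $3$ is unchanged by the two elementary moves: $3\mid\text{val}(v)$ absorbs full turns, and the global orientation makes the three $\pm1$ changes in the rerouting move agree in sign. Your position functions $\text{pos}_v$ package the paper's case analysis (the extra $\text{val}(v)$ terms) into a telescoping sum, and you make the sign-alignment step explicit via the boundary of $[uvw]$ where the paper relies on a figure. Step 1 also gives $\theta(pp')\equiv\theta(p)+\theta(p')$ modulo $\text{val}(v_0)$, hence modulo $3$, which is all the homomorphism step needs. The paper's integer identity can in fact fail by a multiple of $\text{val}(v_0)$ at the two junction angles.
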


\begin{proof} 
    Let $p\in P(S,v_0)$. We have to check that $\theta(p)$ does not change when we apply the homotopy moves 1 and 2 defined at the beginning of this section. Suppose $e_1e_2$ is a subsequence that occurs in $p$. Insert a canceling pair $e_1e\bar ee_2$. We have to check that $\theta(e_1e_2)\equiv\theta(e_1e)+\theta(e\bar e)+\theta(\bar e e_2)\pmod3$. Note that $\theta(e\bar e)=0$, which leaves us with showing $\theta(e_1e_2)\equiv\theta(e_1e)+\theta(\bar e e_2)\pmod3$. There are two cases to consider. Let $v$ be the terminal vertex of $e_1$. First assume that when listing the edges of st$(v)$ that contain $v$ using the orientation at $v$, the edge $e$ appears after $e_1$ but before $e_2$. See Figure \ref{canceling_figure}.
    Then
    $$\theta(e_1e)+\theta(\bar e e_2)=\text{val}(v)+\theta(e_1e_2)\equiv \theta(e_1e_2) \pmod3$$
    \begin{figure}[h!]
        \centering
        \includegraphics[width=0.4\linewidth]{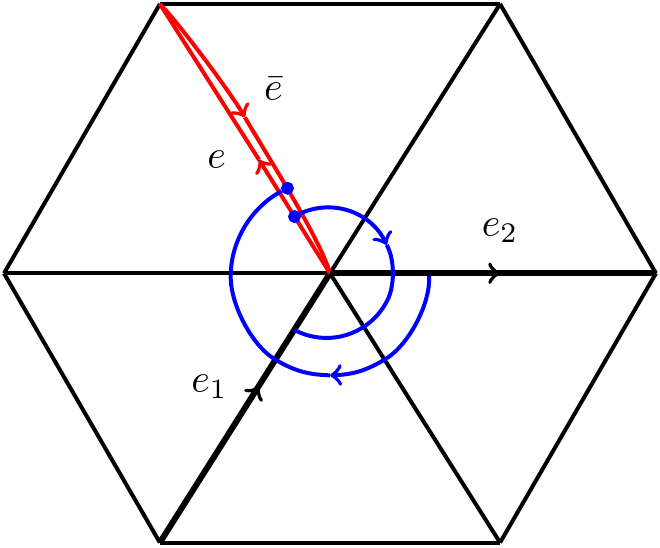}
        \caption{This figure shows the insertion of a canceling pair.}
        \label{canceling_figure}
    \end{figure}
    because $S$ is assumed to be 3-divisible, so val$(v)\equiv0\pmod3$. 
    
    \noindent The other case where the edge $e$ appears after $e_2$ is treated in the same way, we leave the details to the reader. This shows that insertion of canceling pairs into the path $p$ does not alter $\theta(p)$.
    
    Nest assume that $e_1e_2e_3$ is a subsequence that occurs in $p$ and we replace $e_2$ by $e_2'e_2''$, where $e_2$, $e_2'$, and $e_2''$ are edges of a single triangle. Let $v$ be the terminal vertex of $e_2'$. We consider the case depicted in Figure \ref{rerouting_figure}. 
    We have 
    \begin{align*}
        \theta(e_1e_2'e_2''e_3) &=\theta(e_1e_2')+\theta(e_2'e_2'')+\theta(e_2''e_3) \\
        &=\theta(e_1e_2)-1+\text{val}(v)-1+\theta(e_2e_3)-1 \\
        &\equiv\theta(e_1e_2)+\theta(e_2e_3)\pmod3
    \end{align*}
    because val$(v)\equiv0\pmod3$. The other case, where the triangle sits on the other side of $e_2$ is treated similarily, and we leave the details to the reader. This shows that rerouting a path across a triangle does not change $\theta(p)$.
    
    \begin{figure}[h]
        \centering
        \includegraphics[width=0.3\linewidth]{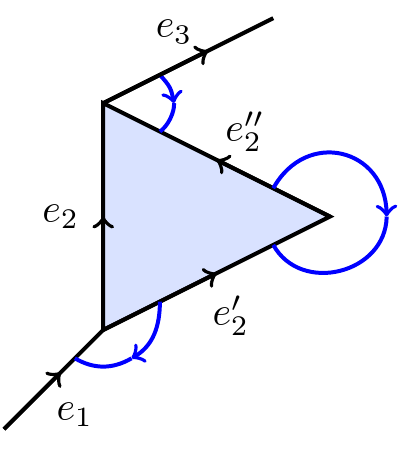}
        \caption{Rerouting across a triangle. Note that we assume clockwise orientations at all vertices, something we can guarantee only in the orientable case.}
        \label{rerouting_figure}
    \end{figure}
    
    We have shown that $\theta[p]=\theta(p)\mod3$ is well defined. Now notice that 
    $$\theta[pp']\equiv\theta(pp')\equiv\theta(p)+\theta(p')\equiv\theta[p]+\theta[p']\pmod3.$$
    This shows that $\theta$ is a group homomorphism.
\end{proof}

We point out that since $\theta$ maps to an abelian group it factors over the commutator subgroup and we obtain a group homomorphism
$$\theta\colon H_1(K)\to \mathbb Z_3.$$

\section{Labelings}

\begin{definition}\label{j_labelable_def}(surface labeling) 
    Suppose $S$ is a 3-divisible oriented surface. A \textit{labeling} on $S$ is a function 
    $$\lambda\colon E(S)\to \mathbb Z_3=\{0, 1, 2 \}$$
    such that for every vertex $v$ of $S$ the following labeling condition holds: The consecutive edges of st$(v)$ that contain $v$ can be listed $e_0,\ldots e_l$ in orientation direction so that  $\lambda(e_i)=i$ (mod 3). 
\end{definition}

We also need a weaker labeling notion that does not require a global orientation.

\begin{definition}\label{weaklabel_def}(weak surface labeling) 
    Let $S$ be a 3-divisible surface. A \textit{weak labeling} on $S$ is a function 
    $$\lambda\colon E(S)\to \mathbb Z_3=\{0, 1, 2 \}$$ which takes on only the values $0$ and $1$
    such that for every vertex $v$ of $S$ the following weak labeling condition holds: For any chosen orientation at $v$ the consecutive edges of st$(v)$ containing $v$ can be listed $e_0,\ldots e_l$ in orientation direction so that  $\lambda(e_i)=0$ (mod 3) if and only if $i=0$ (mod 3).
\end{definition}

Note that a labeling becomes a weak labeling if every value 2 is replaced by the 1.
A 3-divisible surface may or may not have a weak labeling. Note that every 2-simplex in a labeled surface carries the labels $0$, $1$, and $2$ on its sides ($0$, $1$, $1$ in case of a weak labeling).
Also, knowing the value $\lambda(e)$ of a single edge in a 3-divisible orientable surface $S$ completely determines the labeling. For suppose that $e=\{v,w\}$. It is clear that the labeling condition determines the labeling of all edges containing $v$ and all edges containing $w$. Since the 1-skeleton of $S$ is connected, we see that all labels are indeed determined.

\begin{theorem}\label{j_strong_theorem}
    Suppose $S$ is a 3-divisible, oriented, weakly labeled surface. Then $S$ admits a labeling.
\end{theorem}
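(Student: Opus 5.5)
The plan is to produce the labeling explicitly from the weak labeling $\lambda$ by deciding, for each edge with $\lambda(e)=1$, whether it should become $1$ or $2$. The decision uses the orientation of a triangle containing $e$. By the remark preceding the theorem, every 2-simplex $\sigma$ carries weak labels $0,1,1$, so it has a unique $0$-edge. Give $\sigma$ its orientation from $S$, so that $\partial\sigma$ is a cyclic oriented triangle. Each of the two $1$-edges of $\sigma$ then either immediately \emph{follows} the $0$-edge along $\partial\sigma$ or immediately \emph{precedes} it. Define $\mu(e)=0$ if $\lambda(e)=0$. Otherwise set $\mu(e)=1$ if $e$ follows the $0$-edge in a triangle containing it, and $\mu(e)=2$ if it precedes it. The convention $1$/$2$ versus $2$/$1$ will be fixed at the end to match the direction used in Definition~\ref{j_labelable_def}.

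\textbf{Well-definedness.} This is the key step and the one I expect to need the most care. Let $e=\{v,w\}$ with $\lambda(e)=1$, lying in the two triangles $\sigma,\sigma'$. List the edges at $v$ as $e_0,\dots,e_l$ with the weak labeling condition, so $e=e_i$ with $i\not\equiv 0 \pmod 3$. The triangles on the two sides of $e$ contain $e_{i-1}$ and $e_{i+1}$ respectively. Exactly one of $i-1$ and $i+1$ is $\equiv 0 \pmod 3$.
\begin{itemize}
\item In one triangle, the $0$-edge is $e_{i\mp1}$, which passes through $v$.
\item In the other triangle, both edges at $v$ carry label $1$, so its $0$-edge is the edge opposite $v$, which meets $e$ at $w$.
\end{itemize}
Hence the $0$-edges of $\sigma$ and $\sigma'$ meet $e$ at different endpoints. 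Since the global orientation makes $\gamma$ a cycle, $\partial\sigma$ and $\partial\sigma'$ traverse $e$ in opposite directions. Say $\partial\sigma$ runs $v\to w$ and its $0$-edge meets $e$ at $v$; then in $\sigma$ the edge $e$ follows the $0$-edge. In $\sigma'$, $e$ is run $w\to v$ and the $0$-edge meets it at $w$, so $e$ again follows the $0$-edge. The mirror case gives ``precedes'' in both triangles. So $\mu$ is well defined.

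\textbf{Labeling condition.} Fix $v$ and list its edges $e_0,\dots,e_l$ in the orientation direction, with $\lambda(e_j)=0$ exactly when $j\equiv 0\pmod 3$.
\begin{itemize}
\item For $j\equiv 1$, consider the triangle spanned by $e_{j-1}$ and $e_j$. Its $0$-edge is $e_{j-1}$, which shares $v$ with $e_j$. Whether $e_j$ follows or precedes the $0$-edge is determined solely by the local orientation at $v$. That local orientation is induced by the global one and is the same at every vertex, so all such $e_j$ receive the same value $a\in\{1,2\}$.
\item For $j\equiv 2$, use the triangle spanned by $e_j$ and $e_{j+1}$, whose $0$-edge $e_{j+1}$ sits on the opposite side of $e_j$. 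This reverses the relation, so all such $e_j$ receive $3-a$.
\end{itemize}
If $a=1$, the listing gives $\mu(e_j)\equiv j \pmod 3$. If $a=2$, swap the global convention for $1$ and $2$. Since $a$ is the same at all vertices, one global choice works everywhere, and $\mu$ is a labeling on $S$.
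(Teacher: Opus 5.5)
Your argument is correct. Once your $1/2$ convention is fixed, your $\mu$ is exactly the labeling the paper builds, but you organize the verification differently.

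The paper works at vertices. At each $v$ it rewrites the weak labels $0,1,1,0,1,1,\ldots$, read in the orientation direction, as $0,1,2,0,1,2,\ldots$. The labeling condition at $v$ then holds by construction, and the only thing to check is that the local labelings $\lambda_u,\lambda_v$ agree on each edge $e=\{u,v\}$. This is done by contradiction: $\lambda_u(e)=1$, $\lambda_v(e)=2$ would force two consecutive $0$-edges in st$(w)$ at the third vertex $w$ of one triangle on $e$.

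You instead attach the label to triangles. You prove it does not depend on which of the two triangles on $e$ you use, because their $0$-edges meet $e$ at opposite endpoints and $\partial\sigma,\partial\sigma'$ traverse $e$ in opposite directions. You then verify the vertex condition as a separate step.

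Your route gives an intrinsic, formula-like description of the labeling and makes explicit where the global orientation enters, which the paper leaves to a figure. The paper's route gets the vertex condition for free and needs only one compatibility check.

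In your last step, make the uniformity claim precise. Write $e_j=\{v,x_j\}$. ``Orientation direction'' at every vertex means traversing lk$(v)$ as $\partial\gamma_v$ does. So the triangle spanned by $e_{j-1},e_j$ is always oriented as $(v,x_{j-1},x_j)$, or always the reverse depending on convention. This is exactly why your value $a$ does not depend on $v$.
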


\begin{proof}
    Let $\lambda'$ be a weak labeling of $S$ and let $v$ be a vertex. Any orientation at $v$, in particular the one induced by the orientation of $S$, gives us a list of consecutive edges $e_0,\ldots e_l$ of st$(v)$ with weak labels $0, 1, 1, 0,\ldots, 0, 1, 1$. We now change the list of weak labels $0, 1, 1, 0,\ldots, 0, 1, 1$ at the vertex $v$ by the labels $0, 1, 2, 0,\ldots, 0, 1, 2$, using the local orientation induced by the orientation of $S$, and obtain a local labeling $\lambda_v$ at $v$. We do this at all vertices. We claim that if $e=\{ u, v \}$, then $\lambda_u(e)=\lambda_v(e)$. Note first that if $\lambda'(e)=0$, then $\lambda_u(e)=\lambda_v(e)=0$, because we never change $0$ labels in the process. Suppose $\lambda_u(e)=1$ and $\lambda_v(e)=2$. See Figure \ref{localglobal_figure}. This implies that at the vertex $w$ we have two consecutive edges in st$(w)$ labeled with $0$, which contradicts the fact that $\lambda'$ is a weak labeling. Given an edge $e$ in $S$ we define $\lambda(e)=\lambda_x(e)$, where $x$ is a vertex of $e$, to obtain a labeling of $S$.
\end{proof}

\begin{figure}[h]
    \centering
    \includegraphics[width=0.5\linewidth]{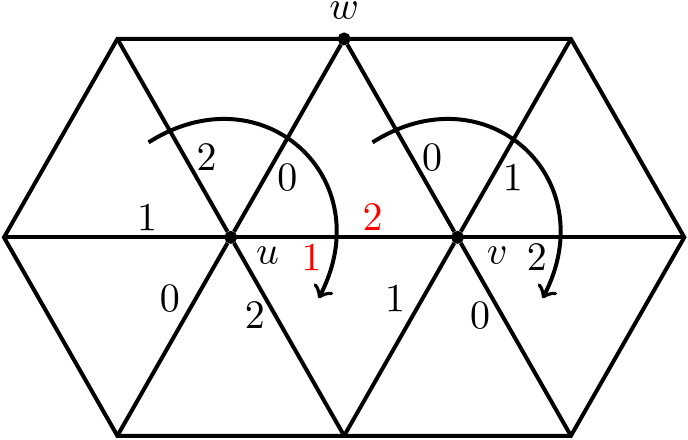}
    \caption{$\lambda_u(e)\ne \lambda_v(e)$ contradicts the fact that $\lambda'$ is a weak labeling.}
    \label{localglobal_figure}
\end{figure}

Note that the proof of Theorem \ref{j_strong_theorem} gives a bijection between weak labelings and labelings on a 3-divisible oriented surface: Given a weak labeling the construction given in the proof produces a unique labeling, and replacing all values $2$ by $1$ turns a labeling into a weak labeling.

\begin{lemma}\label{j_numberlabel_lemma}
    Let $S$ be a 3-divisible, labelable, oriented surface. Then $S$ has exactly three distinct labelings (using the given orientation).
\end{lemma}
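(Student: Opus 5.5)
Since $S$ is labelable, we fix a labeling $\lambda$ and produce two more labelings by shifting, namely $\lambda+1$ and $\lambda+2$ (values taken mod $3$). Afterwards we show that every labeling is one of these three, using the remark before Theorem \ref{j_strong_theorem} that a single edge value determines a labeling.

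\emph{Existence of three labelings.} Let $c\in\mathbb Z_3$ and set $\lambda_c(e)=\lambda(e)+c$. Take a vertex $v$. The labeling condition for $\lambda$ gives a list $e_0,\ldots,e_l$ of the consecutive edges of st$(v)$ containing $v$, in orientation direction, with $\lambda(e_i)=i$. Because $S$ is 3-divisible, the number of edges at $v$ is divisible by $3$ (it equals the number of triangles at $v$, since lk$(v)$ is a circle). So the cyclic sequence of labels around $v$ is $0,1,2,0,1,2,\ldots,0,1,2$, and it closes up consistently. Under $\lambda_c$ each label increases by $c$. We restart the list at the edge $e_j$ with $j\equiv -c \pmod 3$, so that $\lambda_c(e_j)=0$, and read cyclically in the same orientation direction. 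The labels then read $0,1,2,\ldots$, so $\lambda_c$ satisfies the labeling condition at $v$. The three functions $\lambda_0,\lambda_1,\lambda_2$ are pairwise distinct, since they differ on any edge.

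\emph{Uniqueness.} Let $\mu$ be any labeling and fix an edge $e$. Choose $c$ with $\lambda_c(e)=\mu(e)$. By the observation preceding Theorem \ref{j_strong_theorem}, the value on one edge determines the labeling, so $\mu=\lambda_c$.

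The main obstacle is making that determination argument explicit. Suppose two labelings agree on an edge $e=\{v,w\}$. At $v$ the labels go up by $1$ at each step in orientation direction around $v$. Hence knowing the label of one edge at $v$ fixes the labels of all edges at $v$, and the choice of starting index in the list does not matter. The same holds at $w$. The two labelings therefore agree on every edge incident to a vertex of $e$. By induction along paths in the connected 1-skeleton, they agree everywhere.

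It is essential here that the direction of increase is fixed by the given global orientation. Reversing the orientation would produce a different family of labelings, which is why the statement specifies the given orientation. The count is therefore exactly three.
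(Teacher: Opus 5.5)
Your proposal is correct and follows essentially the same route as the paper. Both arguments shift a given labeling by $1$ and $2$ to get three distinct labelings, then use that a labeling is determined by local data (the paper uses its restriction to one star, you use its value on one edge) together with connectivity of the 1-skeleton; you additionally spell out how 3-divisibility guarantees the shifted functions satisfy the labeling condition, which the paper leaves implicit.
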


\begin{proof}
    Suppose $\lambda_0$ is a labeling on $S$. Then $\lambda_1(e)\equiv\lambda_0(e)+1\pmod3$ and $\lambda_2(e)\equiv\lambda_0(e)+2\pmod3$ define labelings as well. Thus $S$ has at least three distinct labelings. Suppose $\lambda$ is a labeling (using the fixed orientation).  Let $v$ be a vertex. Then at $v$ there are exactly three different labelings at $v$, and each is a restriction to the st$(v)$ of one of the $\lambda_i$. But the labeling at $v$ determines the rest of the labeling. Thus $\lambda=\lambda_i$, for some $i=0, 1, 2$. 
\end{proof}

It can also be shown that a 3-divisible non-orientable weakly labelable surface has a unique weak labeling. \\

If $p$ is a path in $S$ then we denote by $N(p)=\bigcup \text{st}(v)$, where the union is taken over the vertices on the path.

\begin{definition}\label{j_labelableloop_def}(closed path labeling) 
    Suppose $p$ is a closed path in a 3-divisible oriented surface $S$. A \textit{labeling} of $p$ is a function
    $$\lambda\colon E(N(p))\to \mathbb Z_3$$
    so that the labeling condition of Definition \ref{j_labelable_def} holds for every vertex on the path. Similarly, suppose $p$ is a path in a 3-divisible surface $S$. Then the labeling is called \textit{weak} if it only takes on the values $0$ and $1$ and the weak labeling condition of Definition \ref{weaklabel_def} holds for every vertex on the path.
\end{definition}

\begin{lemma}\label{j_pathlabel_lemma}
    Let $S$ be a 3-divisible surface. Then $S$ has a weak labeling if and only if every closed path $p$ has a weak labeling.
\end{lemma}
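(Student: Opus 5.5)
The forward direction is immediate. If $\lambda$ is a weak labeling of $S$, then for any closed path $p$ the restriction of $\lambda$ to $E(N(p))$ satisfies the weak labeling condition at every vertex of $p$, so it is a weak labeling of $p$.

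For the converse I would use a propagation (``developing'') argument along paths, and read the hypothesis on closed paths as a consistency statement.

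\textbf{Step 1: local labelings at a vertex.} At a single vertex $v$, a weak labeling of st$(v)$ is determined by choosing which of the three residue classes of consecutive edges around $v$ receives the label $0$. Since val$(v)\equiv 0\pmod 3$, there are exactly three such local weak labelings of the edges containing $v$. This choice does not depend on the orientation chosen at $v$, because reversing the cyclic order preserves the pattern ``$0$ exactly at positions $\equiv 0 \pmod 3$'' up to a shift.

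\textbf{Step 2: propagation along edges.} Suppose $u,v$ are adjacent and $\lambda_u$ is a local weak labeling at $u$. The edges around $v$ include $\{u,v\}$ and the two edges $\{v,w\},\{v,w'\}$, where $w,w'$ are the two link neighbours of $v$ in lk$(u)$. The labels $\lambda_u$ assigns to $\{u,w\},\{u,v\},\{u,w'\}$ are a cyclically consecutive triple, so exactly one of them is $0$. Within each triangle $\{u,v,w\}$ exactly one side must be $0$. I expect these two facts to force the labels of the edges at $v$ adjacent to $\{u,v\}$, and hence, by Step 1, a unique compatible local labeling $\lambda_v$. This is the key local lemma. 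Its proof is a short case check on where the $0$ sits in the triple: it is the same kind of check as in the proof of Theorem \ref{j_strong_theorem} (Figure \ref{localglobal_figure}), using that a weakly labeled triangle carries $0,1,1$.

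\textbf{Step 3: global definition and consistency.} Fix a base vertex $v_0$ and a local weak labeling $\lambda_{v_0}$. For any vertex $v$, choose an edge path from $v_0$ to $v$ (the 1-skeleton is connected) and propagate via Step 2 to obtain $\lambda_v$. Well-definedness amounts to this: propagating around any closed path $p$ based at $v_0$ returns $\lambda_{v_0}$. Here the hypothesis enters. A weak labeling $\mu$ of $p$ restricts at each vertex of $p$ to a local weak labeling, and by the uniqueness in Step 2 these local labelings are exactly the ones obtained by propagation starting from $\mu|_{v_0}$. Since $\mu$ is a single function, propagation from $\mu|_{v_0}$ returns to $\mu|_{v_0}$. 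The three starting labelings at $v_0$ are cyclic shifts of one another, and propagation commutes with shifting (it is determined by the local rule). So the monodromy around $p$ fixes one starting labeling, hence all three, in particular $\lambda_{v_0}$. Paths that backtrack cause no problem.

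Finally define $\lambda(e)=\lambda_v(e)$ for $v\in e$. Step 2 guarantees $\lambda_u(\{u,v\})=\lambda_v(\{u,v\})$, so $\lambda$ is a weak labeling of $S$.

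\textbf{Main obstacle.} I expect the main obstacle to be the claim that ``propagation commutes with shifting'' in the non-orientable case. There the three local labelings may not be intrinsically cyclically ordered, so the monodromy is only a permutation of three objects, and it is not immediately clear why fixing one forces it to fix all. My first attempt would be to avoid this: apply the hypothesis directly with a closed path $p$ built from the chosen path to $v$, and argue path-independence by comparing two paths $q,q'$ through the closed path $q\bar q'$. In that formulation I only need that the particular $\lambda_{v_0}$ extends along $p$. I would try to get this by showing that any weak labeling of $p$ can be re-based: since the $0$-classes at $v_0$ are determined by a choice of one of three edges, I would try to modify $\mu$ locally near $v_0$ to match $\lambda_{v_0}$.
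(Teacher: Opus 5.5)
Your forward direction is fine. For the converse, you miss the idea that makes the lemma nearly a matter of definitions, and the route you take instead has a genuine gap. The paper picks a single closed path $p$ that passes through every vertex of $S$, which is possible since the 1-skeleton is connected. Then $N(p)=S$, so a weak labeling of $p$ is a function on $E(S)$ satisfying the weak labeling condition at every vertex. That is already a weak labeling of $S$, and no propagation or monodromy is needed.

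Your Step 3 fails as stated, for two reasons.

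\textbf{The triangle rule does not hold for path labelings.} You claim that the local labelings of a weak labeling $\mu$ of $p$ at consecutive vertices $u,v$ are related by your propagation rule. That uniqueness relies on the triangle rule (each triangle on $\{u,v\}$ has exactly one side labeled $0$). But a weak labeling of a path only satisfies the weak labeling condition at vertices \emph{on} $p$, whereas the triangle rule needs it at the third vertices $w,w'$. Take $\mu(\{u,w'\})=\mu(\{v,w'\})=0$ and $\mu(\{u,v\})=\mu(\{u,w\})=\mu(\{v,w\})=1$. This is allowed by the conditions at $u$ and $v$ and agrees on $\{u,v\}$. Yet $\mu|_v$ is not the propagation of $\mu|_u$ when $w,w'\notin p$: propagation would put the $0$ on $\{v,w\}$. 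So the hypothesis applied to an individual loop gives no control over its monodromy. Indeed, for a simple closed path without chords one can always choose labels with all its edges labeled $1$.

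\textbf{The non-orientable fallback cannot work.} The obstacle you flag is real. Relative to coherent local orientations, propagation shifts the three classes. Around an orientation-reversing loop, the monodromy is therefore a reflection $x\mapsto c-x$ of $\mathbb Z_3$, which fixes exactly one class. So ``fixing one forces fixing all'' is false. Re-basing $\mu$ to an arbitrary $\lambda_{v_0}$ is also impossible, consistent with the paper's remark that weak labelings of non-orientable surfaces are unique.

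A repair would have to take $\lambda_{v_0}$ as the restriction of a labeling of one closed path. That path would need to run through a generating set of loops and through all the off-path third vertices along them. At that point it is simpler to take one path through all vertices, as the paper does.
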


\begin{proof}
    A labeling of $S$ restricts to a labeling of a given closed path $p$. For the other direction, simply note that there is a closed path $p$ in $S$ that passes through every vertex. Thus, a labeling of $p$ gives a labeling of $S$.
\end{proof}

\begin{lemma}\label{j_construct_cycle_lemma}
    Let $S$ be a 3-divisible surface with weak labeling $\lambda$. Let $\sigma$ be one of its triangles. Then $S$ carries a unique (up to reversal) 2-circuit $v_0v_1\ldots v_n$ containing $\sigma$ such that $\lambda\{ v_i, v_{i+1}\}\ne 0$ for all $i$. 
\end{lemma}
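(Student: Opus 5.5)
The idea is to build the circuit deterministically, one vertex at a time, and show that the weak labeling leaves exactly one choice at every step. The rule is that consecutive vertices $v_i,v_{i+1}$ must span an edge of label $1$, since the labels take only the values $0$ and $1$.

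\emph{Triangles.} Each triangle carries labels $0,1,1$. Let $\sigma=\{a,b,c\}$ with $\lambda\{a,b\}=0$. Then $b$ and $c$ cannot be adjacent in the circuit, because $\{a,b\}$ is the only edge of $\sigma$ whose endpoints are forbidden from being consecutive. So a 2-circuit through $\sigma$ with all consecutive labels nonzero must traverse $\sigma$ as $\ldots a\,c\,b\ldots$ or its reverse. Call $c$ the \emph{apex} of $\sigma$. Up to reversal, the occurrence of $\sigma$ therefore fixes the three consecutive terms $v_0v_1v_2=acb$.

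\emph{Extension step.} Suppose $v_{i-1}v_i$ is an edge with $\lambda\{v_{i-1},v_i\}=1$, and it sits in the triangle $\tau=\{v_{i-2},v_{i-1},v_i\}$. On a surface this edge lies in exactly two triangles, $\tau$ and a second triangle $\tau'=\{v_{i-1},v_i,w\}$. The next term must be $w$, because $\{v_{i-1},v_i,v_{i+1}\}$ has to be a triangle different from $\tau$. We also need $\lambda\{v_i,w\}=1$. To see this, look at st$(v_i)$ with the consecutive edges listed around $v_i$. The weak labeling condition says the labels around $v_i$ run in the cyclic pattern $0,1,1,0,1,1,\ldots$. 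The edges $v_iv_{i-2}$, $v_iv_{i-1}$, $v_iw$ are consecutive in this list, with $\lambda(v_iv_{i-1})=1$. One of the two triangles $\tau,\tau'$ is the one whose edge at $v_i$ is labeled $0$ in the pattern. Since $\lambda\{v_{i-2},v_i\}$ is the label of a non-consecutive pair $v_{i-2}v_i$ in the circuit, it is $0$. Hence by the pattern $0,1,1$ the next edge $v_iw$ has label $1$. So the label sequence along consecutive circuit edges stays $1$ forever, and the continuation is forced uniquely.

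\emph{Closing and uniqueness.} The pairs (triangle, traversal direction) are finite in number, and the forward map $(v_{i-1},v_i,v_{i+1})\mapsto(v_i,v_{i+1},v_{i+2})$ is invertible by the symmetric argument going backwards. Hence the sequence is periodic and returns to $\sigma$. This gives a closed cyclic sequence $v_0\ldots v_n$. Uniqueness up to reversal follows because every step was forced.

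\emph{Main obstacle.} The remaining point is that no triangle appears twice in one period. Take the first repeat. The triangle determines its apex, and the apex sits in the middle of the traversal. So a repeat reproduces consecutive triples equal to the earlier ones up to reversal, since each triple has the form (0-edge endpoint, apex, other endpoint). If the repeat is in the same direction, then by injectivity of the backward map the period has already closed, so there is no genuine repeat. The delicate case is a repeat in reverse direction: the circuit would reach $\sigma$ as $b\,c\,a$. I would first try to show that the direction is consistent with the local labeling around $c$. Under the weak labeling condition, and using a local orientation at each vertex, the step $a\to c\to b$ passes from the $0$-edge side to the $1$-edge side in a fixed rotational sense. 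Comparing how the orientations propagate across the shared edge $\{v_i,v_{i+1}\}$, both occurrences would induce the same direction, which excludes the reversal. This is where the argument is most fragile, and I would treat it carefully using Figure-style local pictures at the apex.
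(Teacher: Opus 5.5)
Your forcing step and the observation that $F\colon(x,y,z)\mapsto(y,z,w)$ is a bijection on apex-centred triples are sound. There are two minor slips. In the \emph{Triangles} step it is $a$ and $b$, not $b$ and $c$, that cannot be adjacent. And $\lambda\{v_{i-2},v_i\}=0$ holds because the other two edges of $\tau$ are labeled $1$ by induction and every triangle has exactly one $0$-edge, not because the pair is non-consecutive.

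The genuine gap is the step you flag yourself. Periodicity of the orbit of $(a,c,b)$ only shows that the sequence closes up. To get a 2-circuit you must also show that no triangle occurs twice in one period, i.e.\ that the orbit never contains both a triple $(x,y,z)$ and its reversal $(z,y,x)$. You do not prove this, and the route you sketch cannot supply it. At the apex $c$ the consecutive edges $cv_{-1},ca,cb,cv_3$ carry the labels $0,1,1,0$. This pattern is symmetric under swapping $a$ and $b$, so a weak labeling picks out no rotational sense of passage through $c$. Transporting a local orientation along the gallery does not help either. Consecutive traversals $v_iv_{i+1}v_{i+2}$ and $v_{i+1}v_{i+2}v_{i+3}$ induce incompatible orientations on adjacent triangles. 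So comparing two visits to the same triangle only relates their directions through the parity of the number of steps between them and the orientation character of the loop, neither of which you control. Note that the lemma must also cover non-orientable $S$, where galleries can be M\"obius bands.

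The missing argument is short and local. In your setup put $R(x,y,z)=(z,y,x)$ and $s_i=F^i(a,c,b)$. The triangle on $\{y,z\}$ other than $\{y,z,w\}$ is $\{x,y,z\}$, so $FRF=R$, i.e.\ $F^{-1}R=RF$. Suppose $s_m=Rs_j$ with $m>j$ and $m-j$ minimal. Then $s_{m-1}=F^{-1}Rs_j=Rs_{j+1}$, so minimality forces $m-j\le 2$. If $m=j+2$, then $s_{j+1}=Rs_{j+1}$, impossible since $x\ne z$. If $m=j+1$, then $Fs_j=Rs_j$, i.e.\ $(y,z,w)=(z,y,x)$, impossible since $y\ne z$.

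The paper does the equivalent thing at the first repeated triangle $\sigma_n=\sigma_j$. Each edge lies in exactly two triangles, and for $j\ge 1$ the two $1$-edges of $\sigma_j$ are already shared with $\sigma_{j-1}$ and $\sigma_{j+1}$. So the $1$-edge $\{v_n,v_{n+1}\}$ would have to be the $0$-edge of $\sigma_j$, which is impossible. Hence $j=0$ and $\{v_n,v_{n+1}\}=\{v_0,v_1\}$. The reversed entry $[v_1v_0]$ is then ruled out because it would give consecutive labels $0,1,0$ around $v_0$.

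With either argument in place, the rest of your proposal (closing up, and uniqueness up to reversal by forcing) goes through.
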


\begin{proof}
     Let $\sigma=\sigma_0=\{v_0,v_1,v_2\}$. Note that every triangle contains exactly two edges labeled 1. Assume WLOG that $e_0=\{ v_0,v_1\}$ and $e_1=\{v_1,v_2\}$ are edges labeled 1. We start our circuit as $v_0v_1v_2$. There is exactly one other triangle $\sigma_1=\{v_1, v_2, v_3\}$ in the surface $S$ which contains the edge $e_1$. Note that the edge $e_2=\{ v_2, v_3\}$ is labeled 1 because otherwise two edges in the star of $v_2$ that are not three edges apart would be labeled $0$. We extend our circuit to $v_0v_1v_2v_3$. We continue in  this manner until we encounter for the first time a triangle that we have seen before, say $\sigma_n=\sigma_j$. We have constructed
     $$v_0v_1v_2\ldots v_{n+1}$$
     so that the $e_i=\{v_i, v_{i+1}\}$, $0\le i\le n-1$ are distinct edges and $\sigma_j=\{v_j, v_{j+1}, v_{j+2}\}$, $0\le j\le n-1$, are distinct triangles.

    The edge $e_n=\{ v_n, v_{n+1}\}$ is contained in the distinct triangles $\sigma_{n-1}$ and some $\sigma_j$. See Figure \ref{toruscircuit_figure}. If $j\ne 0$ then since all other edges have been used, this implies that $\{ v_n, v_{n+1}\}=\{v_j, v_{j+2}\}$, which is impossible because the label on $\{v_j, v_{j+2}\}$ is 0 and all edges $e_0, e_1,\ldots, e_n$ are labeled 1. Thus $j=0$ and $[v_nv_{n+1}]=[v_0v_{1}]$ or $[v_nv_{n+1}]=[v_1v_0]$. Suppose the latter holds. In that case the lk$(v_0)$ contains two edges that are labeled zero but are only \textcolor{purple}{two} edges apart, which can not occur in a labeling. 
    Thus $[v_nv_{n+1}]=[v_0v_{1}]$. It then follows that $v_0v_1\ldots v_n$ is a 2-circuit containing $\sigma$ such that $\lambda\{ v_i, v_{i+1}\}=1$.
    
    \begin{figure}[h]
        \centering
        \includegraphics[width=0.35\linewidth]{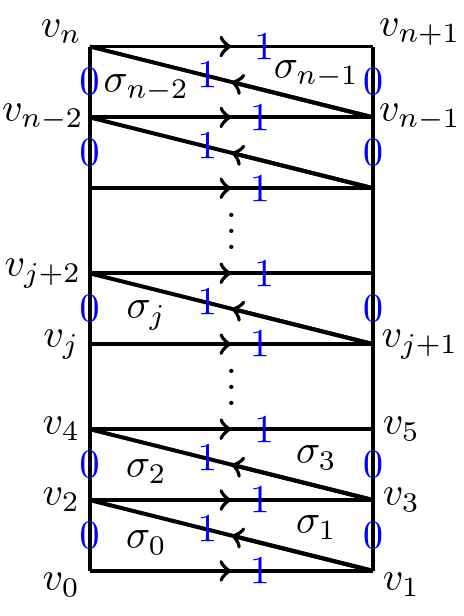}
        \caption{}
        \label{toruscircuit_figure}
    \end{figure}
    
    Let us address uniqueness (up to reversal). To start out construction we could have chosen different orientations on the edges $e_0$ and $e_1$ and a different starting point. We could have started with $v_2v_1v_0$ instead of $v_0v_1v_2$. There is exactly one other triangle $\sigma'_1=\{v_1, v_0, v'_3\}$ in the surface $S$ which contains the edge $e_0$ and it is $\sigma'_1=\{v_1, v_0, v_n\}$. Thus the construction of our circuit continues as $v_2v_1v_0v_n$. We are building the reversed circuit $v_nv_{n-1}\ldots v_0$. This shows that the circuit that contains $\sigma$ and has all subsequent edges labeled non-zero is indeed unique up to reversal.
\end{proof}

In the proof of the above lemma a gallery of triangles is produced that form an annulus (in case the number of triangles are even), or a M\"obius band (in case the number of triangles is odd). In fact it is not difficult to see that there is bijective correspondence between such annuli/M\"obius bands and 2-circuits.\\
    
Here is one of our main results.

\begin{theorem}\label{j_decomp_theorem}
    A 3-divisible surface $S$ admits a 2-circuit decomposition if and only if it admits a weak labeling. 
\end{theorem}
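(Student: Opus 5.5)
The plan is to prove the two directions separately. The direction ``weak labeling $\Rightarrow$ decomposition'' comes essentially from Lemma \ref{j_construct_cycle_lemma}. The converse is a direct construction of a labeling from the circuits.

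\textbf{Weak labeling gives a decomposition.} Fix a weak labeling $\lambda$. For every triangle $\sigma$, Lemma \ref{j_construct_cycle_lemma} gives a 2-circuit $c(\sigma)$ through $\sigma$ all of whose consecutive edges $\{v_i,v_{i+1}\}$ carry label $1$. This circuit is unique up to reversal. If $\tau$ is a triangle occurring in $c(\sigma)$, then $c(\sigma)$ is also a circuit through $\tau$ with all consecutive edges labeled $1$. By uniqueness, $c(\tau)=c(\sigma)$ up to reversal and cyclic shift. Hence ``lying on the same circuit'' is an equivalence relation on the triangles. Choosing one circuit per class gives a collection of 2-circuits in which every triangle occurs exactly once, that is, a 2-circuit decomposition.

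\textbf{A decomposition gives a weak labeling.} Let a 2-circuit decomposition be given. Within a circuit $v_1\dots v_n$, call the edges $\{v_i,v_{i+1}\}$ \emph{consecutive} edges and the edges $\{v_i,v_{i+2}\}$ \emph{skip} edges. Each triangle $\{v_i,v_{i+1},v_{i+2}\}$ has exactly two consecutive edges and one skip edge.
\begin{itemize}
\item Set $\lambda(e)=0$ if $e$ is a skip edge of a triangle containing it, and $\lambda(e)=1$ otherwise.
\item For well-definedness, each edge $e$ lies in exactly two triangles, and I must show that $e$ is a skip edge in one of them iff it is a skip edge in the other. If $e=\{v_i,v_{i+1}\}$ is consecutive in $\sigma_{i-1}=\{v_{i-1},v_i,v_{i+1}\}$, then it is also consecutive in $\sigma_i=\{v_i,v_{i+1},v_{i+2}\}$. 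These are two distinct triangles containing $e$, hence both triangles at $e$ (surface condition). So a consecutive edge is consecutive on both sides, and the complement, the skip edges, are skip on both sides.
\item Check the weak labeling condition at a vertex $v$. Each occurrence of $v=v_i$ in a circuit contributes the three triangles $\sigma_{i-2},\sigma_{i-1},\sigma_i$. These are consecutive around $v$: they are glued along $\{v_{i-1},v_i\}$ and $\{v_i,v_{i+1}\}$. In order, the edges through $v$ appear as
\[
\{v_{i-2},v_i\},\ \{v_{i-1},v_i\},\ \{v_i,v_{i+1}\},\ \{v_i,v_{i+2}\},
\]
with labels $0,1,1,0$.
\item The occurrences of $v$ therefore partition the cyclic sequence of triangles around $v$ into blocks of three. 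Adjacent blocks share a $0$-edge, which has label $0$ from both sides by well-definedness. The labels around $v$ thus read $0,1,1,0,1,1,\dots$ in either direction, which is exactly the weak labeling condition.
\end{itemize}

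\textbf{Main obstacle.} The hardest step is justifying that the blocks of three triangles at $v$ tile the star of $v$ with no overlaps or gaps. They cannot overlap, because each triangle appears exactly once in the decomposition, and within that triangle $v$ occupies exactly one of the three positions. So each triangle at $v$ lies in exactly one block. The blocks are contiguous arcs of the disc $\mathrm{st}(v)$. Each block ends at a skip edge, and the neighbouring triangle across that skip edge is also bounded by it as a skip edge. That neighbouring triangle must therefore be the first or last triangle of another block. Going around the link, the arcs then chain together into the full cycle, so the $0$-labels occur exactly every third edge around $v$.
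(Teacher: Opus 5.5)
Your proposal is correct and follows essentially the same route as the paper. Consecutive edges $\{v_i,v_{i+1}\}$ get label $1$ and skip edges $\{v_i,v_{i+2}\}$ get label $0$, each occurrence of a vertex contributes a $0,1,1,0$ block to its star, and the converse assembles circuits from Lemma~\ref{j_construct_cycle_lemma} using its uniqueness; your well-definedness and tiling arguments spell out more carefully what the paper states briefly (the paper phrases well-definedness as ``$[v_iv_{i+1}]$ occurs once, $[v_iv_{i+2}]$ occurs twice'').
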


\begin{proof} 
    Suppose $S$ admits a 2-circuit decomposition. Since every triangle occurs exactly once in one of the circuits, every edge occurs as $[v_iv_{i+1}]$ or as $[v_iv_{i+2}]$ somewhere in the circuits. Note that $[v_iv_{i+1}]$ occurs exactly once, because the edge determines the two triangles $\{v_i,v_{i+1},v_{i+2}\}$ and $\{v_{i-1},v_i,v_{i+1}\}$ in the surface $S$ containing it. However $[v_iv_{i+2}]$ determines only one triangle $\{v_i,v_{i+1},v_{i+2}\}$, so it must occur exactly twice in the circuits. This also shows that an edge cannot both occur in the form $[v_iv_{i+1}]$ and in the form $[v_iv_{i+2}]$. If $e$ is an edge occurring in the form $[v_iv_{i+1}]$ we define $\lambda(e)=1$, otherwise $\lambda(e)=0$. We have to check that this satisfies the weak labeling condition.
    
    Let $v$ be a vertex and assume it occurs as $v_i$ in some circuit. This contributes the three triangles $\{ v_{i-2}, v_{i-1}, v_i\}$, $\{ v_{i-1}, v_{i}, v_{i+1}\}$, $\{ v_{i}, v_{i+1}, v_{i+2}\}$ to st$(v)$. The edges $\{ v_{i-2}, v_{i}\}$, $\{ v_{i-1}, v_{i}\}$, $\{ v_{i}, v_{i+1}\}$, $\{ v_{i}, v_{i+2}\}$ carry labels $0$, $1$, $1$, $0$, respectively. Every time $v$ occurs somewhere in a circuit we add three triangles to the st$(v)$ with edges labeled zero exactly three edges apart. This shows that our $\lambda$ is indeed a weak labeling.
        
    Conversely, suppose $S$ has a weak labeling. Let $\sigma_0$ be a triangle in $S$. By Lemma \ref{j_construct_cycle_lemma}, there is a unique (up to reversal) circuit $\vec v_0$ containing $\sigma_0$ with subsequent edges labeled non-zero. If $\vec v_0$ contains all triangles of $S$, we are done. If not, let $\sigma_1$ be a triangle not contained in $\vec v_0$. By Lemma \ref{j_construct_cycle_lemma}, there is a circuit $\vec v_1$ containing $\sigma_1$ with subsequent edges labeled non-zero which is disjoint from $\vec v_0$ by uniqueness. We continue in this fashion to produce a circuit decomposition. 
\end{proof}

\section{A cohomological obstruction}

In Section \ref{fundamentalgroup_section} we defined a homomorphism $\theta\colon \pi_1(K,v_0)\to \mathbb Z_3$ that maps a homotopy class $[p]$ to its angle sum $\theta(p)$ modulo 3. It factors through the commutator subgroup and induces a homomorphism on the first homology $\theta\colon H_1(K)\to \mathbb Z_3$ which we also, by a slight abuse of notation, call $\theta$. The universal coefficient theorem for cohomology provides an isomorphism $H^1(K,\mathbb Z_3)\to \text{Hom}(K,\mathbb Z_3)$, and so we can interpret $\theta$ as an element of $H^1(K,\mathbb Z_3)$. It turns out that in the case of an orientable surface, $\theta$ is an obstruction for 2-circuit decomposability.

\begin{lemma}\label{j_cohompathobstruction_lemma}
    Let $S$ be a 3-divisible oriented surface and let $p\in P(S,v_0)$ be a closed path based at $v_0$. Then $p$ is labelable if and only if $\theta(p)\equiv0\pmod3$.
\end{lemma}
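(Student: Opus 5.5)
Write $p=e_1\ldots e_n$ with $e_i=[v_iv_{i+1}]$ and $v_{n+1}=v_1$, and indices taken cyclically. The plan is to propagate a labeling along $p$ one vertex at a time. The labeling condition at a single vertex $v$ (with the orientation of $S$) is rigid: once the label of one edge at $v$ is fixed, all edges of st$(v)$ containing $v$ are forced. Moving $k$ steps in orientation direction around $v$ adds $k$ to the label. Also, at each vertex $v$ a labeling of st$(v)$ automatically labels lk$(v)$, since each link edge is the third side of a triangle whose other two sides carry labels $i,i+1$; the third side then gets $i+2$.

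For the edges of $N(p)$ to be consistently labeled, I would use that the labeling at $v_i$ and the labeling at $v_{i+1}$ must agree on the shared edge $e_i$. Adjacent vertex labelings then automatically agree on the whole overlap of their stars. The reason is that they agree on $e_i$, hence on the two triangles through $e_i$, and compatibility at a shared vertex is governed by the same local rule. This point needs a brief check that the star overlaps consist only of edges reached through $e_i$, which holds for a simplicial surface near a single edge.

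The key computation concerns what happens to the label as we turn at $v_{i+1}$ from $e_i$ to $e_{i+1}$. Reading $e_i$ reversed as $[v_{i+1}v_i]$, the label of $e_{i+1}=[v_{i+1}v_{i+2}]$ equals $\lambda(e_i)$ plus or minus the number of steps between them, depending on direction. By definition of the angle, this number is $\theta(e_ie_{i+1})$ up to sign and a constant convention. Fix the convention so that $\lambda(e_{i+1})\equiv\lambda(e_i)+\theta(e_ie_{i+1})\pmod 3$ up to a global sign, noting that the clockwise count is taken modulo val$(v)\equiv0\pmod3$. Now start with an arbitrary label on $e_1$ and propagate through $v_2,\dots,v_n$, with each step forced. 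Returning to $v_1$, the label forced on $e_1$ by $e_n$ differs from the initial one by $\pm\theta(p)\pmod 3$. Hence a labeling exists iff $\theta(p)\equiv0$. Conversely, a labeling of $p$ gives these relations at every vertex, and summing them gives $\theta(p)\equiv0$.

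I expect the main obstacle to be non-simple paths. If $p$ revisits a vertex $v$ (or an edge), the labeling at $v$ is determined once and must be consistent with every visit. Also, edges of $N(p)$ lying in stars of non-consecutive path vertices must agree. I would handle this by splitting $p$ at a repeated vertex into two closed subpaths $p',p''$ with $\theta(p)=\theta(p')+\theta(p'')$. Consistency at the repeated vertex is then exactly the condition that the loop returning there has angle sum $\equiv0$. One would need each such subloop to satisfy this, which is not implied by $\theta(p)\equiv0$ alone. I would therefore argue via homotopy invariance of $\theta$ (Theorem 2) and the fact that any closed subloop inside the labeled region has angle $\equiv0$, or else restrict attention to simple closed paths, which suffice in Lemma \ref{j_pathlabel_lemma}. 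This step needs the most care.
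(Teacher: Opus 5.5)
\paragraph{The gap at the step you flagged.} Your propagation argument and the telescoping sum are exactly the paper's proof. The forward direction (a labeling of $p$ forces $\theta(p)\equiv 0$) is complete. The converse, however, breaks precisely at the step you flagged, and neither of your remedies repairs it.

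Appealing to ``closed subloops inside the labeled region have angle $\equiv 0$'' is circular. That is the forward direction applied to a labeling you have not yet built. Homotopy invariance only gives $\theta(q_1)+\theta(q_2)\equiv\theta(p)$ for a splitting $p\simeq q_1q_2$, not $\theta(q_1)\equiv 0$.

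Restricting to simple closed paths does not help either. Suppose an edge of $S$ joins two non-consecutive vertices of $p$ (a chord). It closes an arc of $p$ into a loop $q_1$ whose vertices all lie on $p$. A labeling of $p$ then restricts to a labeling of $q_1$ and forces $\theta(q_1)\equiv 0$, which $\theta(p)\equiv0$ does not imply. Moreover, Lemma~\ref{j_pathlabel_lemma} needs a path through \emph{every} vertex, and for such a path every edge off $p$ is a chord.

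Separately, your claim that consecutive stars overlap only through $e_i$ is false. In the boundary of a tetrahedron, lk$(1)$ and lk$(2)$ share the edge $\{3,4\}$. The claim is also unnecessary: the labeling condition only constrains edges incident to path vertices, so you should not force labels on link edges.

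\paragraph{The converse is false as stated.} No argument can close this gap. Take a closed path $q$ with $\theta(q)\not\equiv 0$, for example $q=B$ on the torus $S$ of Figure~\ref{covering_figure}, and put $p=qqq$. The turns of $p$ are those of $q$ repeated three times, so $\theta(p)=3\theta(q)\equiv 0$. But $p$ and $q$ have the same vertices, so $N(p)=N(q)$, and a labeling of $p$ would be a labeling of $q$. That contradicts the forward direction.

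The paper's proof has the same hole, hidden in its last sentence (``we can uniquely extend\ldots''). That sentence tacitly assumes that no vertex is revisited and that $p$ has no chords. Two things do hold:
\begin{itemize}
\item The equivalence holds for closed paths with distinct vertices and no chords.
\item The statement Theorem~\ref{j_cohom_theorem} actually needs is true: if $\theta(p)\equiv0$ for \emph{every} closed path, then $S$ has a labeling. To see this, fix the label of one edge and propagate along arbitrary edge paths using your recursion. Independence of the chosen path is exactly the vanishing of $\theta$ on closed paths.
\end{itemize}
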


\begin{proof}
    Assume first that $p=e_1\ldots e_n$ is labelable. Let $\lambda$ be a labeling. We make the observation that $$\lambda(e_{i+1})\equiv\lambda(e_{i})+\theta(e_ie_{i+1})\pmod3.$$
    This implies
    \begin{align*}
        \theta(p) &= \theta(e_1e_2)+\theta(e_2e_3)+\ldots +\theta(e_ne_1) \\
        &\equiv (\lambda(e_2)-\lambda(e_1))+(\lambda(e_3)-\lambda(e_2))+\ldots +(\lambda(e_1)-\lambda(e_n)) \\
        &\equiv0\pmod3
    \end{align*}
    
    For the converse assume that $\theta(p)\equiv0\pmod3$. Let $\lambda(e_1)=0$ and define recursively $\lambda(e_{i+1})=\lambda(e_{i})+\theta(e_ie_{i+1})\mod3$. Since $e_{n+1}=e_1$ it suffices to check that $\lambda(e_{n+1})=\lambda(e_1)=0$. Our recursive definition gives
    $$\lambda(e_{n+1})\equiv\lambda(e_1)+\theta(e_1e_2)+\ldots +\theta(e_ne_1)\equiv\lambda(e_1)+\theta(p)\equiv0\pmod3.$$
    Since $S$ is 3-divisible and oriented we can uniquely extend the labeling of the edges of $p$ to a labeling of the edges of $N(p)$ that contain vertices of $p$.
\end{proof}

\begin{theorem}\label{j_cohom_theorem}
    Let $S$ be a 3-divisible oriented surface. Then $\theta=0$ if and only if $S$ has a circuit decomposition.
\end{theorem}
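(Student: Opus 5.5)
The plan is to chain together the equivalences already established. By Theorem \ref{j_decomp_theorem}, $S$ has a 2-circuit decomposition if and only if it has a weak labeling. By Theorem \ref{j_strong_theorem} and the remark following it, weak labelings and labelings are in bijection on a 3-divisible oriented surface, so the condition is equivalent to $S$ admitting a labeling. It therefore suffices to show that $\theta=0$ (as a homomorphism $\pi_1(S,v_0)\to\mathbb Z_3$, equivalently as an element of $H^1(S,\mathbb Z_3)$) if and only if $S$ admits a labeling.

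For the direction ``labeling $\Rightarrow \theta=0$'', take any closed path $p\in P(S,v_0)$. A labeling of $S$ restricts to a labeling of $p$, and Lemma \ref{j_cohompathobstruction_lemma} then gives $\theta(p)\equiv 0\pmod 3$. Every element of $\pi_1(S,v_0)$ is represented by such a path, so $\theta$ vanishes identically.

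For the converse, assume $\theta=0$. Choose a closed path $p$ based at $v_0$ that passes through every vertex of $S$; one exists because the 1-skeleton is connected, for instance by traversing a spanning tree and doubling back along each edge. Since $\theta[p]=0$, Lemma \ref{j_cohompathobstruction_lemma} produces a labeling of $p$, i.e.\ a function on $E(N(p))$ satisfying the labeling condition at every vertex of $p$. Because $p$ visits every vertex, $N(p)=S$ and every vertex condition holds, so this is a labeling of $S$. (This is the oriented analogue of Lemma \ref{j_pathlabel_lemma}.) Replacing each label $2$ by $1$ gives a weak labeling, and Theorem \ref{j_decomp_theorem} yields the circuit decomposition.

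The main obstacle is the last step of Lemma \ref{j_cohompathobstruction_lemma}: the path labeling must be globally consistent on $N(p)$. The recursion assigns labels along the edges of $p$, and each vertex condition then determines labels on all edges at that vertex. If $p$ revisits a vertex $v$, or an edge $\{u,w\}$ joins two vertices of $p$ that are not consecutive along it, the labels forced from different visits must agree. I would verify this by noting that any two visits of $p$ to $v$, or any such chord, combine with a segment of $p$ to form a closed path. That closed path has angle sum $0\pmod 3$ because $\theta\equiv 0$ on all loops, not merely on $p$. The identity $\lambda(e_{i+1})\equiv\lambda(e_i)+\theta(e_ie_{i+1})$ then shows the two determinations coincide. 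This is exactly where the full hypothesis $\theta=0$, rather than just $\theta(p)=0$, is used.
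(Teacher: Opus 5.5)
Your proof is correct and follows the same chain as the paper: Theorem~\ref{j_decomp_theorem}, Lemma~\ref{j_cohompathobstruction_lemma}, and a closed path through every vertex as in Lemma~\ref{j_pathlabel_lemma}, and you also make explicit the passage between weak labelings and labelings via Theorem~\ref{j_strong_theorem}, which the paper leaves implicit. Your consistency check is more than a formality, because the converse half of Lemma~\ref{j_cohompathobstruction_lemma} is false for paths that revisit vertices: if $\theta(a)\equiv 1$, then $p=a^3$ has $\theta(p)\equiv 0$, yet $p$ visits exactly the vertices of $a$, so a labeling of $p$ would be a labeling of $a$, forcing $\theta(a)\equiv 0$; so the paper's step ``every $p$ is labelable'' genuinely needs your argument using $\theta\equiv 0$ on all loops rather than just $\theta(p)\equiv 0$.
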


\begin{proof}
    Assume first that $\theta=0$ is the zero map. Then $\theta[p]\equiv\theta(p)\equiv0\pmod3$ for every $p$. It follows from Lemma \ref{j_cohompathobstruction_lemma} that every $p$ is labelable. Lemma \ref{j_pathlabel_lemma} now implies that $S$ is labelable. Theorem \ref{j_decomp_theorem} now implies that $S$ has a 2-circuit decomposition.
    
    For the other direction assume that $S$ has a 2-circuit decomposition. Then, by Theorem \ref{j_decomp_theorem}, $S$ is labelable. Then every closed path in $S$ is labelable and it follows from Lemma \ref{j_cohompathobstruction_lemma} that $\theta(p)\equiv0\pmod3$ for every $p$. Thus $\theta=0$.
\end{proof}

Theorem \ref{j_cohom_theorem} provides an easy way to construct triangulated surfaces that do or do not have circuit decompositions. See Figure \ref{covering_figure}.

\begin{corollary}\label{j_sphere_corollary}
    A 3-divisible 2-sphere always admits a cycle decomposition.
\end{corollary}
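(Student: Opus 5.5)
The plan is to derive the corollary directly from Theorem \ref{j_cohom_theorem}. A triangulated 2-sphere $S$ is orientable, so we first fix an orientation. By the theorem in Section \ref{fundamentalgroup_section}, the angle sum modulo 3 then gives a well defined homomorphism $\theta\colon \pi_1(S,v_0)\to\mathbb Z_3$, since $S$ is 3-divisible. Theorem \ref{j_cohom_theorem} says that $S$ has a 2-circuit decomposition if and only if $\theta=0$. So it suffices to show that this homomorphism vanishes.

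The key step is that $\pi_1(S,v_0)$ is trivial. The combinatorial fundamental group defined in Section \ref{fundamentalgroup_section} is isomorphic to the fundamental group of the topological realization, which is $S^2$, and $\pi_1(S^2)=1$. Any homomorphism out of the trivial group is zero, so $\theta=0$. Equivalently, in the homological formulation, $H_1(S)=0$ gives $\theta=0\in H^1(S,\mathbb Z_3)=0$.

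With $\theta=0$, Theorem \ref{j_cohom_theorem} yields a 2-circuit decomposition. Tracing the chain explicitly: every closed path has $\theta(p)\equiv 0\pmod 3$, so by Lemma \ref{j_cohompathobstruction_lemma} every closed path is labelable. Lemma \ref{j_pathlabel_lemma} then gives a labeling of $S$, hence a weak labeling (replace the values 2 by 1). Theorem \ref{j_decomp_theorem} then produces the decomposition.

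There is no real obstacle. The only point needing care is to make sure the hypotheses are met: the sphere is a closed surface, it is orientable so the oriented version of the theory applies, and its combinatorial $\pi_1$ agrees with the topological one. The latter is asserted in the paper via Van Kampen.
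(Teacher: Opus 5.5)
Your proposal is correct and is the paper's argument: the paper notes that $H_1(S)=0$, so $\theta$ (which factors through $H_1$) is the zero map, and Theorem \ref{j_cohom_theorem} then gives the 2-circuit decomposition. You use $\pi_1(S,v_0)=1$ instead of $H_1(S)=0$, which is the same observation one level up, and your walk through Lemmas \ref{j_cohompathobstruction_lemma} and \ref{j_pathlabel_lemma} and Theorem \ref{j_decomp_theorem} simply unpacks the proof of Theorem \ref{j_cohom_theorem}.
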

\begin{proof}
    $H_1(S)=0$ and thus $\theta$ is the zero map.
\end{proof}

\section{The non-orientable case}

Let $S$ be a surface and let $pr\colon\bar S\to S$ be a covering space of finite index. Note that since $pr$ restricts to a simplicial isomorphism on stars, $S$ is 3-divisible if and only $\bar S$ is.

\begin{lemma}\label{j_deck_trafo_lemma}
    Let $\bar\lambda$ be a weak labeling on the 3-divisible surface $\bar S$ and $d\colon \bar S\to \bar S$ be a deck transformation. Then $\bar \lambda\circ d$ is also a weak labeling on $\bar S$. Thus, the deck transformation group acts on the set of weak labelings on $\bar S$
\end{lemma}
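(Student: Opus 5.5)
The plan is to use that a deck transformation $d\colon \bar S\to\bar S$ is a simplicial automorphism. Being a homeomorphism that permutes the simplices of the lifted triangulation, it maps vertices to vertices, edges to edges and triangles to triangles bijectively. Precomposing a labeling with a simplicial automorphism should then preserve the local condition, since that condition only refers to the cyclic structure of stars.

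First I would check that $\bar\lambda\circ d$ is a function $E(\bar S)\to\mathbb Z_3$ taking only the values $0$ and $1$. This holds because $d$ permutes edges and $\bar\lambda$ only takes these values. Next fix a vertex $v$ of $\bar S$ and put $w=d(v)$. Since $d$ is a simplicial automorphism, it restricts to a simplicial isomorphism st$(v)\to$ st$(w)$ carrying lk$(v)$ onto lk$(w)$. Hence it maps the cyclically ordered edges of st$(v)$ containing $v$ onto the cyclically ordered edges of st$(w)$ containing $w$, preserving cyclic adjacency: two such edges at $v$ span a common triangle exactly when their images do.

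Now take an arbitrary orientation at $v$ and the corresponding cyclic listing $f_0,\ldots,f_l$ of the edges at $v$. The images $d(f_0),\ldots,d(f_l)$ are a cyclic listing of the edges at $w$ in the direction of some local orientation at $w$, namely the pushforward orientation. The weak labeling condition for $\bar\lambda$ at $w$ holds for \emph{any} chosen orientation at $w$. So we may shift the starting index so that $\bar\lambda(d(f_i))=0$ iff $i\equiv0\pmod3$. That is exactly the weak labeling condition for $\bar\lambda\circ d$ at $v$ with respect to the chosen orientation.

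The one point needing care is that $d$ may reverse local orientations, for example on a non-orientable base or for orientation-reversing deck maps. This is harmless precisely because the weak labeling condition is symmetric: the pattern $0,1,1,0,1,1,\ldots$ read backwards is again of the same form after a shift. This symmetry is built into Definition \ref{weaklabel_def} through ``for any chosen orientation''. It is also why the lemma is stated for weak labelings rather than labelings.

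Finally, for the action statement, note that $\bar\lambda\circ\mathrm{id}=\bar\lambda$. We also have $(\bar\lambda\circ d_1)\circ d_2=\bar\lambda\circ(d_1 d_2)$. So $d\cdot\bar\lambda=\bar\lambda\circ d^{-1}$ defines a left action of the deck group on the set of weak labelings; alternatively, $\bar\lambda\circ d$ gives a right action.
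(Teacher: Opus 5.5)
Your proposal is correct and follows essentially the same route as the paper: both use that $d$ restricts to a simplicial isomorphism $\text{st}(v)\to\text{st}(d(v))$ and carry the cyclic pattern $0,1,1,0,\ldots$ of consecutive edges across it. The differences are cosmetic. You fix an arbitrary orientation at $v$ and push it forward, which matches the ``for any chosen orientation'' quantifier a bit more directly than the paper's pulling back to ``some orientation at $v$''. You also spell out the action axioms that the paper leaves implicit.
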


\begin{proof}
    Let $\bar\lambda$ be a weak labeling on $\bar S$. Let $d$ be a deck transformation. We have to check that $\bar\lambda\circ d$ satisfies the weak labeling condition at every vertex. Let $v$ be a vertex and let $v'=d(v)$. There is a listing of the consecutive edges $e_0', e_1', e_2', e_3' \ldots$ of st$(v')$, using a chosen orientation at $v'$, such that $\bar\lambda(e_0')=0, \bar\lambda(e_1')=1, \bar\lambda(e_2')=1, \bar\lambda(e_3')=0 \ldots$. Since $d$ restricts to a simplicial isomorphism from st$(v)$ to st$(v')$, there is a listing of the consecutive edges $e_0, e_1, e_2, e_3 \ldots$ of st$(v)$, using some orientation at $v$, such that $d(e_i)=e_i'$. We now have $\bar\lambda\circ d(e_i)=\bar\lambda(e_i')$ and obtain
    $$\bar\lambda\circ d(e_0)=0, \bar\lambda\circ d(e_1)=1, \bar\lambda\circ d(e_2)=1, \bar\lambda\circ d(e_3)=0, \ldots$$ 
    The weak labeling condition holds at $v$.
\end{proof}

\begin{theorem}\label{j_double_theorem}
    Let $S$ be a non-orientable 3-divisible surface and let $pr\colon\tilde S\to S$ be its orientable double cover. Then $S$ admits a circuit decomposition if and only if $\tilde S$ does.
\end{theorem}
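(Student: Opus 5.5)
By Theorem \ref{j_decomp_theorem} it suffices to show that $S$ admits a weak labeling if and only if $\tilde S$ does. The plan is to pull weak labelings back along $pr$ for one direction. For the other direction, I will find a weak labeling on $\tilde S$ that is invariant under the nontrivial deck transformation, and push it down to $S$.

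\emph{($\Rightarrow$)} Let $\lambda$ be a weak labeling on $S$ and set $\tilde\lambda=\lambda\circ pr$. For a vertex $\tilde v$ of $\tilde S$, $pr$ restricts to a simplicial isomorphism $\text{st}(\tilde v)\to\text{st}(pr(\tilde v))$. A local orientation at $\tilde v$ is carried to a local orientation at $pr(\tilde v)$. The weak labeling condition is required for every choice of orientation at a vertex, so it holds at $pr(\tilde v)$ for the transported orientation. It therefore holds at $\tilde v$ for $\tilde\lambda$. This is the same argument as in Lemma \ref{j_deck_trafo_lemma}.

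\emph{($\Leftarrow$)} Suppose $\tilde S$ has a weak labeling, and let $d$ be the nontrivial deck transformation, an involution. First I count the weak labelings of $\tilde S$. Fix the orientation of $\tilde S$. The remark after Theorem \ref{j_strong_theorem} gives a bijection between weak labelings and labelings. Lemma \ref{j_numberlabel_lemma} says there are exactly three labelings $\lambda_0,\lambda_0+1,\lambda_0+2$. Hence $\tilde S$ has exactly three weak labelings, with zero sets $\lambda_0^{-1}(0)$, $\lambda_0^{-1}(2)$, $\lambda_0^{-1}(1)$ respectively. These zero sets are pairwise distinct, since every triangle carries all three labels. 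By Lemma \ref{j_deck_trafo_lemma}, the group $\{1,d\}\cong\mathbb Z_2$ acts on this three-element set. Every orbit has size $1$ or $2$ and $3$ is odd, so some weak labeling $\tilde\lambda$ satisfies $\tilde\lambda\circ d=\tilde\lambda$.

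Now define $\lambda(e)=\tilde\lambda(\tilde e)$ for any lift $\tilde e$ of an edge $e$ of $S$. The two lifts are $\tilde e$ and $d(\tilde e)$, so $d$-invariance makes $\lambda$ well defined. To check the weak labeling condition at a vertex $v$ of $S$, take a lift $\tilde v$ and transport an arbitrary orientation at $v$ to $\tilde v$ through the star isomorphism $pr|$. The condition for $\tilde\lambda$ at $\tilde v$ then translates verbatim to the condition for $\lambda$ at $v$. Theorem \ref{j_decomp_theorem} then gives a circuit decomposition of $S$.

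The main point needing care is the count of exactly three weak labelings, since the parity argument depends on it. In particular I must verify that the bijection between labelings and weak labelings is genuinely bijective, so that distinct labelings give distinct weak labelings. This follows because $\lambda_0+i$ have different zero sets.
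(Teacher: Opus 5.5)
Your proof is correct and follows the paper's argument essentially step for step. For one direction you pull a weak labeling back along $pr$ using the star isomorphisms. For the other you let the deck group $\mathbb Z_2$ act on the three-element set of weak labelings of $\tilde S$ (Lemma~\ref{j_deck_trafo_lemma}, Lemma~\ref{j_numberlabel_lemma} and the remark after Theorem~\ref{j_strong_theorem}), take a fixed weak labeling, and push it down to $S$.

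You add two small points of care that the paper leaves implicit. First, you check that $\lambda_0$, $\lambda_0+1$, $\lambda_0+2$ have distinct zero sets, which guarantees there are exactly three weak labelings. Second, you correctly call the $d$-invariant object a weak labeling rather than a labeling.
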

\begin{proof}
    Suppose $S$ admits a circuit decomposition. By Theorem \ref{j_decomp_theorem} it follows that $S$ admits a weak labeling $\lambda$. Define $\tilde\lambda=\lambda\circ pr$. We will show that $\tilde\lambda$ is a weak labeling on $\tilde S$. Let $\tilde v$ be a vertex of $\tilde S$ and let $v=pr(\tilde v)$. There is a listing $e_0, e_1, e_2, e_3\dots $ of the consecutive edges in st$(v)$ containing $v$ so that $\lambda(e_0)=0, \lambda(e_1)=1, \lambda(e_2)=1, \lambda(e_3)=0 \ldots $. Now note that $pr\colon \text{st}(\tilde v)\to \text{st}(v)$ is a simplicial isomorphism. Thus there is a listing $\tilde e_0, \tilde e_1, \tilde e_2, \tilde e_3\ldots $ of the consecutive edges in st$(\tilde v)$ containing $\tilde v$, where $pr(\tilde e_i)=e_i$. We now have 
    $$\tilde \lambda(\tilde e_0)=0, \tilde\lambda(\tilde e_1)=1, \tilde\lambda(\tilde e_2)=1, \tilde\lambda(\tilde e_3)=0, \ldots $$
    which shows that the weak labeling condition holds at $\tilde v$.
    Thus $\tilde S$ admits a weak labeling and hence, by Theorem \ref{j_decomp_theorem},  $\tilde S$ admits a circuit decomposition.

    Now suppose $\tilde S$ admits a circuit decomposition and hence, by Theorem \ref{j_decomp_theorem}, $\tilde S$ has a weak labeling. Fix an orientation of $\tilde S$.
    By Lemma \ref{j_deck_trafo_lemma}, the deck transformation group $\mathbb Z_2$ acts on the set of weak labelings. It follows from the remark after Theorem \ref{j_strong_theorem} and Lemma \ref{j_numberlabel_lemma} that there are only three weak labelings on $\tilde S$. Thus $\mathbb Z_2$ acts on a set of three elements, and hence must fix one of them. Thus there is a labeling $\tilde \lambda$ which is invariant under all deck transformations. We define $\lambda$ on $S$ in the following way: $\lambda(e)=0$ if $\tilde \lambda(\tilde e)=0$, and $\lambda(e)=1$ if $\tilde \lambda(\tilde e)=1$, where $\tilde e\in pr^{-1}(e)$. Again, using the fact that $pr\colon \text{st}(\tilde v)\to \text{st}(v)$ is a simplicial isomorphism, it follows that $\lambda$ is a weak labeling on $S$. By Theorem \ref{j_decomp_theorem} it follows $S$ has a circuit decomposition.
\end{proof}

We point out that there do exist covering spaces $pr\colon \bar S\to S$ where $\bar S$ is labelable but does not admit a labeling that is invariant under deck transformations. See Figure \ref{covering_figure}.

\begin{figure}[h]
    \centering
    \includegraphics[width=0.75\linewidth]{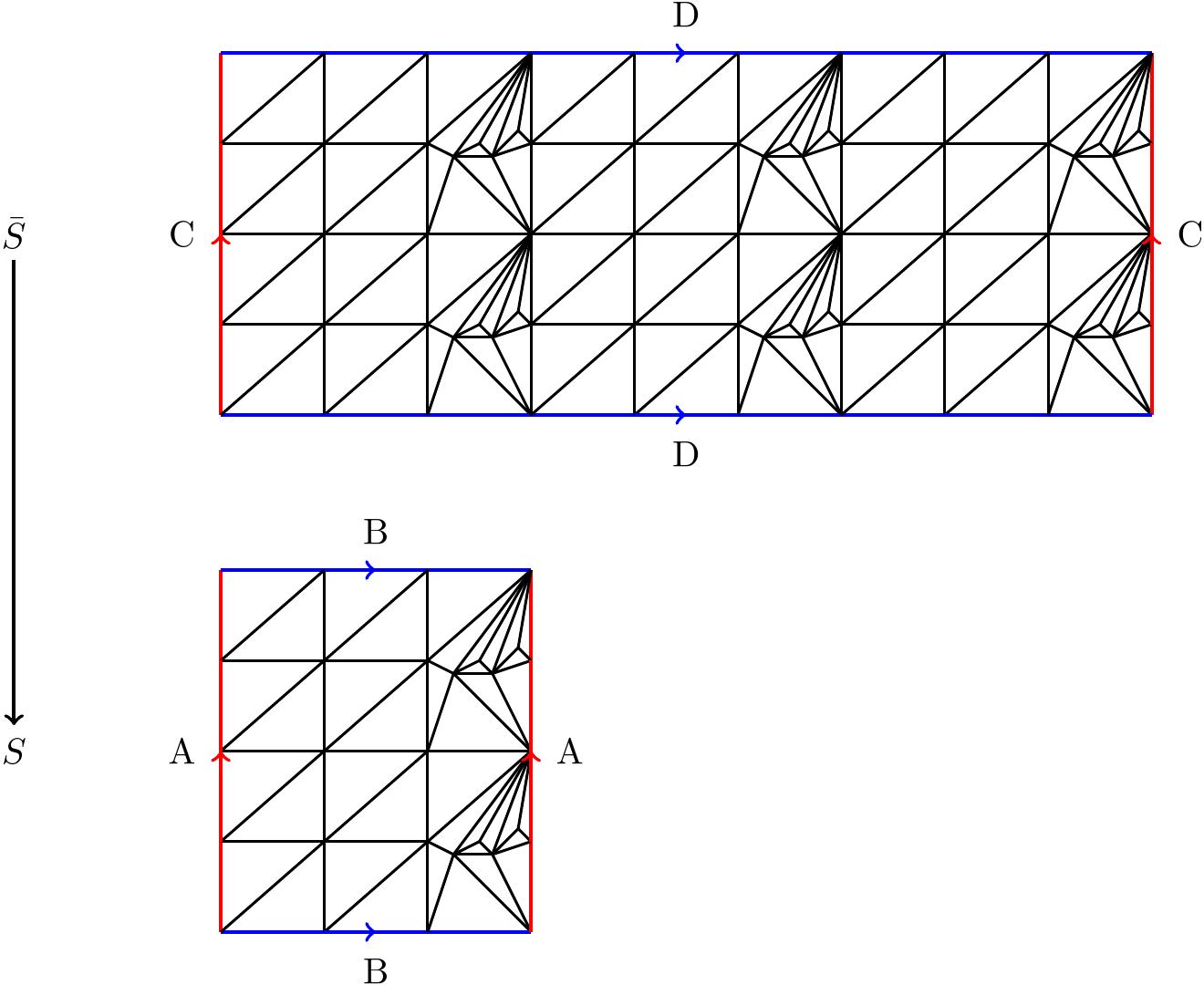}
    \caption{A covering $pr\colon \bar S\to S$ of index 3 of a torus by a torus. Theorem \ref{j_cohom_theorem} implies that $\bar S$ has a circuit decomposition, but $S$ does not:
    Since $\theta(B)=19\equiv1\pmod3$ which is not zero we see that $\theta$ is not zero on S and therefore S does not have a circuit decomposition by Theorem 5. However $\theta(C)=12\equiv0$ and $\theta(D)=3\cdot19\equiv0\pmod3$. Since $[C]$ and $[D]$ generate the fundamental group of $\bar S$, it follows that $\theta$ is zero on $\bar S$, which shows that $\bar S$ has a circuit decomposition.}
    \label{covering_figure}
\end{figure}

\begin{corollary}
    A 3-divisible projective plane $P$ always admits a cycle-decomposition.
\end{corollary}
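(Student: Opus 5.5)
The projective plane $P$ is non-orientable, so I would reduce it to its orientable double cover using Theorem \ref{j_double_theorem}. That theorem says $P$ admits a circuit decomposition if and only if the orientable double cover $\tilde P$ does. The universal cover of the projective plane is the 2-sphere, and this is exactly the orientable double cover. So $\tilde P$ is a triangulated 2-sphere: the triangulation of $P$ lifts along $pr\colon \tilde P\to P$ to a simplicial complex structure on the sphere.

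Next I would check that $\tilde P$ is 3-divisible. As noted at the start of the non-orientable section, $pr$ restricts to a simplicial isomorphism on stars. Hence for every vertex $\tilde v$ of $\tilde P$ we have $\text{val}_2(\tilde v)=\text{val}_2(pr(\tilde v))$, and this is divisible by $3$ because $P$ is 3-divisible.

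Then I would apply Corollary \ref{j_sphere_corollary}, which says a 3-divisible 2-sphere always admits a circuit decomposition. Equivalently, in the language of Theorem \ref{j_cohom_theorem}, $H_1(S^2)=0$ forces $\theta(\tilde P)=0$. So $\tilde P$ has a 2-circuit decomposition, and Theorem \ref{j_double_theorem} transfers it back to $P$.

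The only point needing care is the identification of the orientable double cover with the sphere, together with the claim that the lifted triangulation is a genuine simplicial complex to which the earlier results apply. Both are standard. Simpliciality holds because a covering of a simplicial complex inherits a simplicial structure; at worst one notes that stars map isomorphically, which is all the labeling arguments use. There is no real obstacle beyond this.
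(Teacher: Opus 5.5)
Your proposal is correct and follows the paper's proof exactly: you pass to the orientable double cover $S^2$, observe that 3-divisibility lifts because $pr$ is a simplicial isomorphism on stars, apply Corollary \ref{j_sphere_corollary}, and transfer the decomposition back to $P$ via Theorem \ref{j_double_theorem}. Your extra remark that the lifted triangulation is a genuine simplicial complex is a detail the paper leaves implicit, and your justification of it is fine.
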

\begin{proof}
    Let $pr:S^2\to P$ be the orientable double cover of $P$. Since $P$ is 3-divisible, so is $S^2$. The result now follows from Corollary \ref{j_sphere_corollary} and Theorem \ref{j_double_theorem}.
\end{proof}

Finally, our main result Theorem 1 follows directly from Theorem \ref{j_cohom_theorem} and Theorem \ref{j_double_theorem}.

\vspace{1cm}

Jens Harlander, jensharlander@boisestate.edu\\

Maizie Quatrone, maiziequatrone@gmail.com


\begin{thebibliography}{99}

    \bibitem{Chung} F. Chung, P. Diaconis, R. Graham, {\em Universal cycles for combinatorial structures}, Discrete Mathematics 110 (1992), 43-59.
    
    \bibitem{Glock} S. Glock, F. Joos, D. Kühn, D. Osthus, {\em Euler Tours in Hypergraphs}, Combinatorica 40 (2020), 679-690.
    
    \bibitem{Hatcher} A. Hatcher, Algebraic Topology, Cambridge University Press, Cambridge, 2002. 
    
    \bibitem{Jackson} B. Jackson, {\em Universal cycles of k-subsets and k-permutations}, Discrete Mathematics, 117
    (1993) 141-150.
    
    \bibitem{Munkres} J. R. Munkres, Elements of Algebraic Topology, CRC Press, 1996.
    
    \bibitem{wikipedia} Wikipedia contributors, {\em Seven Bridges of K\"onigsberg}, Wikipedia, The Free Encyclopedia, accessed August 2026. \href{https://en.wikipedia.org/w/index.php?title=Seven_Bridges_of_K%C3%B6nigsberg&oldid=1371189868}{https://en.wikipedia.org/wiki/Seven\_Bridges\_of\_K\"onigsberg}
\end{thebibliography}
\end{document}